\def\shorttitle{Multi-Term Time-Fractional Diffusion Equations}
\def\shortauthor{Z. Li, Y. Liu and M. Yamamoto}
\newfont{\myfnt}{cmssi10 scaled 1440}
\numberwithin{equation}{section}
\def\ps@nk{\def\@oddhead{\vbox{\hbox to \hsize{\pic \footnotesize \it \shorttitle
\hfill \rm \thepage} \vspace{1mm} \vspace*{-2mm}}}
\def\@evenhead{\vbox{\hbox to \hsize{\pic \footnotesize \rm \thepage \hfill \it \shortauthor}
\vspace{1mm} \vspace*{-2mm}}}
\def\@oddfoot{} \def\@evenfoot{}}
\def\ps@first{\def\@oddhead{\vbox{\hbox to \hsize{\pic \footnotesize
} \break}}
\def\@oddfoot{} \def\@evenfoot{}}
\newtheoremstyle{thmstyle}
  {6pt}
  {6pt}
  {\it}
  {}
  {\bf}
  {}
  {.5em}
  {}
\newtheoremstyle{remstyle}
  {6pt}
  {6pt}
  {\rm}
  {}
  {\bf}
  {}
  {.5em}
  {}
\def\Section#1{\Sec{\large #1} \setcounter{equation}{0} \vskip -6mm \indent}
\def\Sec{\@Startsection{section}{1}{\z@}
                                   {-3.5ex \@plus -1ex \@minus -.2ex}%
                                   {2.3ex \@plus.2ex}%
                                   {\normalfont\large\bfseries\boldmath}}
\def\@Startsection#1#2#3#4#5#6{%
  \if@noskipsec \leavevmode \fi
  \par
  \@tempskipa #4\relax
  \@afterindenttrue
  \ifdim \@tempskipa <\z@
    \@tempskipa -\@tempskipa \@afterindentfalse
  \fi
  \if@nobreak
    \everypar{}%
  \else
    \addpenalty\@secpenalty\addvspace\@tempskipa
  \fi
  \@ifstar
    {\@ssect{#3}{#4}{#5}{#6}}%
    {\@dblarg{\@Sect{#1}{#2}{#3}{#4}{#5}{#6}}}}
\def\@Sect#1#2#3#4#5#6[#7]#8{%
  \ifnum #2>\c@secnumdepth
    \let\@svsec\@empty
  \else
    \refstepcounter{#1}%
    \protected@edef\@svsec{\@seccntformat{#1}\relax}%
  \fi
  \@tempskipa #5\relax
  \ifdim \@tempskipa>\z@
    \begingroup
      #6{%
          \@hangfrom{\hskip #3\relax\@svsec \hskip -2.5mm}%
          \interlinepenalty \@M #8\@@par}
    \endgroup
    \csname #1mark\endcsname{#7}%
    \addcontentsline{toc}{#1}{%
      \ifnum #2>\c@secnumdepth \else
        \protect\numberline{\csname the#1\endcsname}%
      \fi
      #7}%
  \else
    \def\@svsechd{%
      #6{\hskip #3\relax
      \@svsec #8}%
      \csname #1mark\endcsname{#7}%
      \addcontentsline{toc}{#1}{%
        \ifnum #2>\c@secnumdepth \else
          \protect\numberline{\csname the#1\endcsname}%
        \fi
        #7}}%
  \fi
  \@xsect{#5}}
\renewenvironment{abstract}{%
        \small
        \quotation
         \noindent {\bfseries \abstractname } }%
      {\if@twocolumn\else\endquotation\fi}
\def\Subsection#1{\Subsec{#1} \vskip -6mm \indent}
\def\Subsec{\@StartSubsection{subsection}{2}{\z@}%
                                     {-3.25ex\@plus -1ex \@minus -.2ex}%
                                     {1.5ex \@plus .2ex}%
                                     {\normalfont\normalsize\bfseries\boldmath}}
\def\@StartSubsection#1#2#3#4#5#6{%
  \if@noskipsec \leavevmode \fi
  \par
  \@tempskipa #4\relax
  \@afterindenttrue
  \ifdim \@tempskipa <\z@
    \@tempskipa -\@tempskipa \@afterindentfalse
  \fi
  \if@nobreak
    \everypar{}%
  \else
    \addpenalty\@secpenalty\addvspace\@tempskipa
  \fi
  \@ifstar
    {\@ssect{#3}{#4}{#5}{#6}}%
    {\@dblarg{\@SubSect{#1}{#2}{#3}{#4}{#5}{#6}}}}
\def\@SubSect#1#2#3#4#5#6[#7]#8{%
  \ifnum #2>\c@secnumdepth
    \let\@svsec\@empty
  \else
    \refstepcounter{#1}%
    \protected@edef\@svsec{\@seccntformat{#1}\relax}%
  \fi
  \@tempskipa #5\relax
  \ifdim \@tempskipa>\z@
    \begingroup
      #6{%
          \@hangfrom{\hskip #3\relax\@svsec\hskip -1.5mm}%
          \interlinepenalty \@M #8\@@par}
    \endgroup
    \csname #1mark\endcsname{#7}%
    \addcontentsline{toc}{#1}{%
      \ifnum #2>\c@secnumdepth \else
        \protect\numberline{\csname the#1\endcsname}%
      \fi
      #7}%
  \else
    \def\@svsechd{%
      #6{\hskip #3\relax
      \@svsec #8}%
      \csname #1mark\endcsname{#7}%
      \addcontentsline{toc}{#1}{%
        \ifnum #2>\c@secnumdepth \else
          \protect\numberline{\csname the#1\endcsname}%
        \fi
        #7}}%
  \fi
  \@xsect{#5}}
\def\list#1#2{\ifnum \@listdepth >5\relax \@toodeep \else \global
\advance \@listdepth\@ne \fi \rightmargin \z@ \listparindent\z@
\itemindent\z@ \csname @list\romannumeral\the\@listdepth\endcsname
\def\@itemlabel{#1}\let\makelabel\@mklab \@nmbrlistfalse #2\relax
\@trivlist \parskip 0pt \parindent\listparindent \advance \linewidth
-\rightmargin \advance\linewidth -\leftmargin \advance\@totalleftmargin
\leftmargin \parshape \@ne \@totalleftmargin \linewidth \ignorespaces}
\renewcommand{\@makecaption}[2]{\begin{center}#1. #2\end{center}}
\theoremstyle{thmstyle}
\newtheorem{thm}{\indent Theorem}[section]
\newtheorem{lem}[thm]{\indent Lemma}
\theoremstyle{remstyle}
\newtheorem{rem}{\indent \bf Remark}[section]
\newsavebox{\mygraphic}
\def\pic{\begin{picture}(0,0) \put(-210,-1250){\usebox{\mygraphic}} \end{picture}}
\newfont{\HUGEbf}{cmbx10 scaled 3500}
\definecolor{gray}{rgb}{0.9,0.9,0.9}
\def\thebibliography#1{\section*{\bf \large References}
\list{[\arabic{enumi}]} {\settowidth \labelwidth{[#1]} \leftmargin
\labelwidth \advance \leftmargin \labelsep \usecounter{enumi}}
\def\newblock{\hskip .11em plus .33em minus .07em} \footnotesize \sloppy \clubpenalty
4000 \widowpenalty 4000 \sfcode`\.=1000 \relax}
\def\BC{\mathbb C}
\def\BN{\mathbb N}
\def\BR{\mathbb R}
\def\cA{\mathcal A}
\def\cC{\mathcal C}
\def\cD{\mathcal D}
\def\cL{\mathcal L}
\def\cQ{\mathcal Q}
\def\cU{\mathcal U}
\def\rd{\mathrm d}
\def\e{\mathrm e}
\def\ri{\mathrm i}
\def\Ga{\Gamma}
\def\Om{\Omega}
\def\al{\alpha}
\def\be{\beta}
\def\ga{\gamma}
\def\de{\delta}
\def\ve{\varepsilon}
\def\ze{\zeta}
\def\te{\theta}
\def\la{\lambda}
\def\vp{\varphi}
\def\f{\frac}
\def\nb{\nabla}
\def\ov{\overline}
\def\pa{\partial}
\def\wh{\widehat}
\def\wt{\widetilde}
\theoremstyle{definition}
\numberwithin{equation}{section}
\title{\Large\bf\boldmath Initial-Boundary Value Problems for Multi-Term\\
Time-Fractional Diffusion Equations with\\
Positive Constant Coefficients$^*$}
\author{\large Zhiyuan LI$^\dag$\qquad Yikan LIU$^\dag$\qquad Masahiro YAMAMOTO$^\dag$}
\date{}
\begin{document}

\maketitle

\thispagestyle{first}
\renewcommand{\thefootnote}{\fnsymbol{footnote}}

\footnotetext{\hspace*{-5mm} \begin{tabular}{@{}r@{}p{13cm}@{}} &
Manuscript last updated: \today.\\
$^\dag$ & Graduate School of Mathematical Sciences, the University of Tokyo,
3-8-1 Komaba, Meguro-ku, Tokyo 153-8914, Japan.\\
& E-mail: zyli@ms.u-tokyo.ac.jp, ykliu@ms.u-tokyo.ac.jp,
myama@ms.u-tokyo.ac.jp\\
$^*$ & This work was supported by the Program for Leading Graduate Schools,
MEXT, Japan.
\end{tabular}}

\renewcommand{\thefootnote}{\arabic{footnote}}

\begin{abstract}
In this paper, we investigate the well-posedness and the long-time asymptotic
behavior for the initial-boundary value problem for multi-term time-fractional
diffusion equations, where the time differentiation consists of a finite
summation of Caputo derivatives with decreasing orders in $(0,1)$ and positive
constant coefficients. By exploiting several important properties of
multinomial Mittag-Leffler functions, various estimates follow from the
explicit solutions in form of these special functions. Then the uniqueness and
continuous dependency upon initial value and source term are established, from
which the continuous dependence of solution of Lipschitz type with respect to
various coefficients is also verified. Finally, by a Laplace transform
argument, it turns out that the decay rate of the solution as $t\to\infty$ is
dominated by the minimum order of the time-fractional derivatives.

\vskip 4.5mm

\noindent\begin{tabular}{@{}l@{ }p{10cm}} {\bf Keywords } & Initial-boundary
value problem, Time-fractional diffusion equation,\\
& Multinomial Mittag-Leffler function, Well-posedness,\\
& Long-time asymptotic behavior, Laplace transform
\end{tabular}

\noindent{\bf AMS subject classifications}\ \ 35G16, 35R11, 33E12, 35B40, 44A10
\end{abstract}

\baselineskip 14.5pt

\setlength{\parindent}{1.5em}

\setcounter{section}{0}

\Section{Introduction}\label{sec-intro}

Let $\Om$ be an open bounded domain in $\BR^d$ with a smooth boundary (for
example, of $C^2$ class) and $T>0$ be fixed arbitrarily. For a fixed positive
integer $m$, let $\al_j$ and $q_j$ ($j=1,\ldots,m$) be positive constants such
that $1>\al_1>\cdots>\al_m>0$. Consider the following initial-boundary value
problem for the multi-term time-fractional diffusion equation
\begin{empheq}[left=\empheqlbrace,right=]{alignat=2}
& \sum_{j=1}^mq_j\pa_t^{\al_j}u(x,t)=Lu(x,t)+F(x,t), & \quad & x\in\Om,\
0<t\le T,\label{eq-gov-u}\\
& u(x,t)=0, & \quad & x\in\pa\Om,\ 0<t\le T,\\
& u(x,0)=a(x), & \quad & x\in\Om,\label{eq-IC-u}
\end{empheq}
where $L$ is a symmetric uniformly elliptic operator with the homogeneous
Dirichlet boundary condition, and we can assume $q_1=1$ without lose of
generality. The regularities of the initial value $a$ and the source term $F$
will be specified later. Here $\pa_t^{\al_j}$ denotes the Caputo derivative
defined by
\[
\pa_t^{\al_j}f(t):=\f1{\Ga(1-\al_j)}\int_0^t\f{f'(s)}{(t-s)^{\al_j}}\,\rd s,
\]
where $\Ga(\,\cdot\,)$ is a usual Gamma function. For various properties of the
Caputo derivative, we refer to Kilbas et al. \cite{KST06} and Podlubny
\cite{P99}. See also \cite{GM97,SKM93} for further contents on fractional
calculus.

In the case of $m=1$, equation \eqref{eq-gov-u} is reduced to its single-term
counterpart
\begin{equation}\label{eq-STTFDE}
\pa_t^\al u=Lu+F\quad\mbox{in }\Om\times(0,T],\ \al\in(0,1).
\end{equation}
The above formulation has been studied extensively from different aspects due
to its vast capability of modeling the anomalous diffusion phenomena in highly
heterogeneous aquifer and complex viscoelastic material (see Adams \& Gelhar
\cite{AG92}, Ginoa et al. \cite{GCR92}, Hatano \& Hatano \cite{HH98},
Nigmatullin \cite{N86} and the references therein). Indeed, although the
single-term time-fractional diffusion equation inherits certain properties from
the diffusion equation with time derivative of natural number order, it differs
considerably from the traditional one especially in sense of its limited
smoothing effect in space and slow decay in time. In Luchko \cite{L09}, a
maximum principle of the initial-boundary value problem for \eqref{eq-STTFDE}
was established, and the uniqueness of a classical solution was proved. Luchko
\cite{L10} represented the generalized solution to \eqref{eq-STTFDE} with $F=0$
by means of the Mittag-Leffler function and gave the unique existence result.
Sakamoto \& Yamamoto \cite{SY11} carried out a comprehensive investigation
including the well-posedness of the initial-boundary value problem for
\eqref{eq-STTFDE} as well as the long-time asymptotic behavior of the solution.
It turns out that the spatial regularity of the solution is only moderately
improved from that of the initial value, and the solution decays with order
$t^{-\al}$ as $t\to\infty$. Recently, the Lipschitz stability of the solution
to \eqref{eq-STTFDE} with respect to $\al$ and the diffusion coefficient was
proved as a byproduct of an inverse coefficient problem in Li et al.
\cite{LZJY13}. For other discussions concerning equation \eqref{eq-STTFDE}, see
e.g., Gorenflo et al. \cite{GLZ99} and Luchko \cite{L08}, Pr\"uss \cite{P93}.
Regarding numerical treatments, we refer to Liu et al. \cite{LZATB07} and
Meerschaert \& Tadjeran \cite{MT04} for the finite difference method and Jin et
al. \cite{JLZ13} for the finite element method.

As natural extension, equation \eqref{eq-gov-u} is expected to improve the
modeling accuracy in depicting the anomalous diffusion due to its potential
feasibility. However, to the authors' best knowledge, published works on this
extension are quite limited in spite of rich literatures on its single-term
version. Luchko \cite{L11} developed the maximum principle for problem
\eqref{eq-gov-u}--\eqref{eq-IC-u} and constructed a generalized solution when
$F=0$ by means of the multinomial Mittag-Leffler functions. Jiang et al.
\cite{JLTB12} considered fractional derivatives in both time and space and
derived analytical solutions. As for the asymptotic behavior, for $m=2$ it
reveals in Mainardi et al. \cite{MMPG08} that the dominated decay rate of the
solution is related to the minimum order of time fractional derivative. On the
other hand, Beckers \& Yamamoto \cite{BY13} investigated
\eqref{eq-gov-u}--\eqref{eq-IC-u} in a slightly more general formulation and
obtained a weaker regularity result than that in \cite{SY11}.

In this paper, we are concerned with the well-posedness and the long-time
asymptotic behavior of the solution to the initial-boundary value problem
\eqref{eq-gov-u}--\eqref{eq-IC-u}, and we attempt to establish results parallel
to that for the single-term case. On basis of the explicit representation of
the solution, we give estimates for the solution by exploiting several
properties of the multinomial Mittag-Leffler function. Moreover, as long as the
continuous dependency of the solution on the initial value and the source term
is verified, we can also deduce the Lipschitz stability of the solution to
\eqref{eq-gov-u}--\eqref{eq-IC-u} with respect to $\al_j$, $q_j$
($j=1,\ldots,m$) and the diffusion coefficient immediately. For the long-time
asymptotic behavior, we employ the Laplace transform in time to show that the
decay rate as $t\to\infty$ is indeed $t^{-\al_m}$, where $\al_m$ is the minimum
order of Caputo derivatives in time.

The rest of this paper is organized as follows. The main results concerning
problem \eqref{eq-gov-u}--\eqref{eq-IC-u} are collected in Section
\ref{sec-main}, which includes theorems on well-posedness and long-time
asymptotic behavior of the solution. The proofs of the main theorems are
postponed to Section \ref{sec-proof}, which is further divided into three
subsections. Subsection \ref{sec-multiML} is devoted to a close scrutiny of the
multinomial Mittag-Leffler functions, which serves as essential keys in the
proofs of well-posedness results in Subsection \ref{sec-stability}. Due to the
difference of techniques, the asymptotic behavior is proved independently in
Subsection \ref{sec-asymp}. Finally, concluding remarks are given in Section
\ref{sec-rem}.

\Section{Main Results}\label{sec-main}

In this section, we state the main results obtained in this paper. More
precisely, we give a priori estimates for the solution $u$ to
\eqref{eq-gov-u}--\eqref{eq-IC-u} with respect to the initial value
(Theorem \ref{thm-reg-v}), the source term (Theorem \ref{thm-reg-w}), and
Lipschitz continuous dependence of the solutions on coefficients and orders
(Theorem \ref{thm-Lip-coef}) so that stability and
uniqueness follow, and we describe the asymptotic behavior of the solution in
Theorem \ref{thm-asymp}.

To this end, we first fix some general settings and notations. Let $L^2(\Om)$
be a usual $L^2$-space with the inner product $(\,\cdot\,,\,\cdot\,)$ and
$H^1_0(\Om)$, $H^2(\Om)$ denote the Sobolev spaces (see, e.g., Adams
\cite{A75}). The elliptic operator $L$ is defined for
$f\in\cD(-L):=H^2(\Om)\cap H^1_0(\Om)$ as
\[
Lf(x)=\sum_{i,j=1}^d\pa_j(a_{ij}(x)\pa_if(x))+c(x)f(x),\quad x\in\Om,
\]
where $a_{ij}=a_{ji}$ ($1\le i,j\le d$) and $c\le0$ in $\ov\Om\,$. Moreover, it
is assumed that $a_{ij}\in C^1(\ov\Om)$, $c\in C(\ov\Om)$ and there exists a
constant $\de>0$ such that
\[
\de\sum_{i=1}^d\xi_i^2\le\sum_{i,j=1}^da_{ij}(x)\xi_i\xi_j,\quad\forall\,x\in\ov\Om\,,\
\forall\,(\xi_1,\ldots,\xi_d)\in\BR^d.
\]
On the other hand, let $\{\la_n,\vp_n\}_{n=1}^\infty$ be the eigensystem of the
elliptic operator $-L$ such that $0<\la_1<\la_2\le\cdots$, $\la_n\to\infty$ as
$n\to\infty$ and $\{\vp_n\}\subset H^2(\Om)\cap H^1_0(\Om)$ forms an
orthonormal basis of $L^2(\Om)$. Then the fractional power $(-L)^\ga$ is
well-defined for $\ga\in\BR$ (see Pazy \cite{P83}) with
\[
\cD((-L)^\ga)=\left\{f\in
L^2(\Om);\thinspace\sum_{n=1}^\infty\left|\la_n^\ga(f,\vp_n)\right|^2<\infty\right\},
\]
and $\cD((-L)^\ga)$ is a Hilbert space with the norm
\[
\|f\|_{\cD((-L)^\ga)}=\left(
\sum_{n=1}^\infty\left|\la_n^\ga(f,\vp_n)\right|^2\right)^{1/2}.
\]
Also we note that $\cD((-L)^\ga)\subset H^{2\ga}(\Om)$ for $\ga>0$ and
especially $\cD((-L)^{1/2})=H^1_0(\Om)$.

Now we are well-prepared to consider the dependency of the solution $u$ to the
initial-boundary value problem \eqref{eq-gov-u}--\eqref{eq-IC-u} upon the
initial value $a$ and the source term $F$. In view of the superposition
principle, it suffices to deal with the cases $F=0$, $a\ne0$ and $a=0$, $F\ne0$
separately.

\begin{thm}\label{thm-reg-v}
Let $F=0$, $0\le\ga\le1$ and $a\in \cD((-L)^\ga)$, where we interpret
$\f1{1-\ga}=\infty$ if $\ga=1$. Concerning the solution $u$ to the
initial-boundary value problem \eqref{eq-gov-u}--$\eqref{eq-IC-u}$, the
followings hold true.

{\rm(a)} There exists a unique solution $u\in C([0,T];L^2(\Om))\cap
C((0,T];H^2(\Om)\cap H^1_0(\Om))$ to \eqref{eq-gov-u}--\eqref{eq-IC-u}.
Actually, $u\in L^{\f1{1-\ga}}(0,T;H^2(\Om)\cap H^1_0(\Om))$ and there exists a
constant $C>0$ such that
\begin{align}
& \|u\|_{C([0,T];L^2(\Om))}\le C\|a\|_{L^2(\Om)},\label{eq-est-v1}\\
& \|u(\,\cdot\,,t)\|_{H^2(\Om)}\le
C\|a\|_{\cD((-L)^\ga)}t^{\al_1(\ga-1)},\quad0<t\le T.\label{eq-est-v2}
\end{align}

{\rm(b)} We have
\begin{equation}\label{eq-asymp-v-init}
\lim_{t\to0}\|u(\,\cdot\,,t)-a\|_{\cD((-L)^\ga)}=0.
\end{equation}

{\rm(c)} There holds $\pa_tu\in C((0,T];L^2(\Om))$. Moreover, there exists a
constant $C>0$ such that
\begin{equation}\label{eq-est-v_t}
\|\pa_tu(\,\cdot\,,t)\|_{L^2(\Om)}\le
C\|a\|_{\cD((-L)^\ga)}t^{\al_1\ga-1},\quad0<t\le T.
\end{equation}

{\rm(d)} If $\ga>0$, then $\pa_t^\be u\in L^{\f1{1-\ga}}(0,T;L^2(\Om))$ for
$0<\be\le\al_1$. Moreover, for $0<\be<1$, there exists a constant $C>0$ such
that
\begin{equation}\label{eq-est-v_t^be}
\|\pa_t^\be u(\,\cdot\,,t)\|_{L^2(\Om)}\le
C\|a\|_{\cD((-L)^\ga)}t^{\al_1\ga-\be},\quad0<t\le T.
\end{equation}
\end{thm}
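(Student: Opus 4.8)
The plan is to build the solution by separation of variables and to reduce every claim to sharp pointwise estimates, uniform in $n$, on the time-dependent Fourier coefficients. Writing $a_n=(a,\vp_n)$ and seeking $u(x,t)=\sum_{n=1}^\infty u_n(t)\vp_n(x)$, projection of \eqref{eq-gov-u}--\eqref{eq-IC-u} onto $\vp_n$ shows that each $u_n$ must solve the scalar multi-term fractional initial value problem
\[
\sum_{j=1}^mq_j\pa_t^{\al_j}u_n(t)=-\la_nu_n(t),\qquad u_n(0)=a_n.
\]
Taking the Laplace transform gives $\wh{u}_n(z)=a_n\big(\sum_jq_jz^{\al_j-1}\big)\big/\big(\sum_jq_jz^{\al_j}+\la_n\big)$, whose inversion produces $u_n(t)=a_n\,v_n(t)$, where $v_n$ is expressed through the multinomial Mittag-Leffler function and satisfies $v_n(0)=1$. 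Thus the entire theorem hinges on good control of $v_n$ and of its (fractional) time derivatives, with constants independent of $n$.

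The decisive ingredient, supplied by the study of multinomial Mittag-Leffler functions in Subsection \ref{sec-multiML}, is the family of bounds
\[
|v_n(t)|\le\f{C}{1+\la_nt^{\al_1}},\qquad
|\pa_t^\be v_n(t)|\le\f{C\la_nt^{\al_1-\be}}{1+\la_nt^{\al_1}}\quad(0<\be\le1),
\]
for $0<t\le T$, the case $\be=1$ being the ordinary derivative $\pa_t v_n$. The whole of (a), (c), (d) then rests on one elementary scaling identity: with $s=\la_nt^{\al_1}$ and $0\le\ga\le1$,
\[
\f{\la_n^{1-\ga}}{1+\la_nt^{\al_1}}=t^{\al_1(\ga-1)}\,\f{s^{1-\ga}}{1+s}\le Ct^{\al_1(\ga-1)},
\]
since $s\mapsto s^{1-\ga}/(1+s)$ is bounded on $[0,\infty)$. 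Indeed, \eqref{eq-est-v1} is immediate from Parseval because $v_n$ is uniformly bounded; multiplying the first bound by $\la_n$ and applying the identity yields $\la_n|v_n(t)|\le C\la_n^\ga t^{\al_1(\ga-1)}$, so that $\|u(\cdot,t)\|_{H^2}^2\le C\sum_n\la_n^2|u_n(t)|^2\le Ct^{2\al_1(\ga-1)}\|a\|_{\cD((-L)^\ga)}^2$, which is \eqref{eq-est-v2}. Raising \eqref{eq-est-v2} to the power $1/(1-\ga)$ and using $\int_0^Tt^{-\al_1}\,\rd t<\infty$ (as $\al_1<1$) gives the stated $L^{1/(1-\ga)}(0,T;H^2\cap H^1_0)$ membership.

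For the continuity and the remaining parts I would argue through uniform convergence of the series. The memberships $u\in C([0,T];L^2)$ and $u\in C((0,T];H^2\cap H^1_0)$ follow once the partial sums are shown to converge in the respective norms uniformly on $[0,T]$ and, respectively, on each compact subset of $(0,T]$, the same bounds dominating the tails. The initial condition \eqref{eq-asymp-v-init} comes from $\|u(\cdot,t)-a\|_{\cD((-L)^\ga)}^2=\sum_n\la_n^{2\ga}|a_n|^2|v_n(t)-1|^2$: each summand tends to $0$ as $t\to0$ since $v_n(0)=1$, and the sum is dominated by $C\sum_n\la_n^{2\ga}|a_n|^2<\infty$, so dominated convergence applies. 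For (c) and (d) I would differentiate the series term by term (resp. apply $\pa_t^\be$), justified by the derivative bounds above; the same scaling computation, now carrying the extra factor $t^{\al_1-\be}$, turns the bound into $|\pa_t^\be v_n(t)|\le C\la_n^\ga t^{\al_1\ga-\be}$ and hence gives \eqref{eq-est-v_t} ($\be=1$) and \eqref{eq-est-v_t^be}; the integrability in (d) follows from $\int_0^Tt^{(\al_1\ga-\be)/(1-\ga)}\,\rd t<\infty$, which holds because $\be\le\al_1$ and $\ga<1$. Uniqueness is read off from the fact that any solution must have Fourier coefficients solving the scalar problem above, whose solution is unique, or alternatively from Luchko's maximum principle \cite{L11}.

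The principal obstacle is establishing the pointwise bounds on $v_n$ uniformly in $\la_n$. For $m=1$ these reduce to the classical monotonicity and asymptotics of $E_{\al,1}(-x)$, but the multinomial Mittag-Leffler function is defined through a multiple power series and lacks an equally convenient integral representation, so deriving the uniform $(1+\la_nt^{\al_1})^{-1}$ decay together with the matching bounds for $\pa_t^\be v_n$ demands a careful, self-contained analysis of these special functions --- precisely the role of Subsection \ref{sec-multiML}.
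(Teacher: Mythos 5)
Your proposal is correct and, for parts (a)--(c), is essentially the paper's own argument: the paper expands in the eigenbasis, takes the representation $u_n(t)=(a,\vp_n)\bigl(1-\la_nt^{\al_1}E^{(n)}_{\bm\al',1+\al_1}(t)\bigr)$ from Luchko \cite{L11}, derives precisely your bound $|v_n(t)|\le C/(1+\la_nt^{\al_1})$ on $(0,T]$ by combining Lemmata \ref{lem-multiML-alter} and \ref{lem-multiML-est}, and then runs your scaling inequality $(\la_nt^{\al_1})^{1-\ga}/(1+\la_nt^{\al_1})\le1$, dominated convergence for (b), and termwise differentiation (Lemma \ref{lem-multiML-diff}) for (c). The one genuine divergence is part (d). You assume Subsection \ref{sec-multiML} supplies the per-mode bound $|\pa_t^\be v_n(t)|\le C\la_nt^{\al_1-\be}/(1+\la_nt^{\al_1})$ for every $0<\be<1$; in the paper it does not---it covers only $v_n$ and $\pa_tv_n$---and note that this pointwise bound cannot be recovered by simply inserting the $\be=1$ bound into the Caputo integral, since that produces an extra factor of order $\log(1+\la_nt^{\al_1})$ when $\la_nt^{\al_1}$ is large. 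The bound is nevertheless true: repeating the termwise computation of Lemma \ref{lem-multiML-diff} with $\pa_t^\be$ in place of $\rd/\rd t$ gives $\pa_t^\be\bigl\{t^{\al_1}E^{(n)}_{\bm\al',1+\al_1}(t)\bigr\}=t^{\al_1-\be}E^{(n)}_{\bm\al',1+\al_1-\be}(t)$, and Lemma \ref{lem-multiML-est} applies because $1+\al_1-\be\in(0,2)$; so your route closes at the cost of one additional (easy) lemma. The paper instead sidesteps any new special-function estimate: it writes $\pa_t^\be u(\,\cdot\,,t)=\f1{\Ga(1-\be)}\int_0^t(t-s)^{-\be}\pa_su(\,\cdot\,,s)\,\rd s$, applies Minkowski's inequality for integrals together with the already-proved estimate \eqref{eq-est-v_t}, and evaluates the Beta-type integral $\int_0^ts^{\al_1\ga-1}(t-s)^{-\be}\,\rd s\le Ct^{\al_1\ga-\be}$; this argument works at the level of norms, where the weight $\la_n^\ga$ has already absorbed the troublesome factor of $\la_n$, so no logarithm appears. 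Both routes yield \eqref{eq-est-v_t^be}, and your integrability check $\al_1\ga-\be>\ga-1$ for $\be\le\al_1$ is exactly the paper's.
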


\begin{thm}\label{thm-reg-w}
Let $a=0$, $1\le p\le\infty$, $0\le\ga\le1$ and $F\in L^p(0,T;\cD((-L)^\ga))$,
where we interpret $1/p=0$ if $p=\infty$. Concerning the solution $u$ to the
initial-boundary value problem \eqref{eq-gov-u}--\eqref{eq-IC-u}, the
followings hold true.

{\rm(a)} If $p=2$, then there exists a unique solution $u\in
L^2(0,T;\cD((-L)^{\ga+1}))$ to \eqref{eq-gov-u}--\eqref{eq-IC-u}. Moreover,
there exists a constant $C>0$ such that
\begin{equation}\label{eq-est-w-L2}
\|u\|_{L^2(0,T;\cD((-L)^{\ga+1}))}\le C\|F\|_{L^2(0,T;\cD((-L)^\ga))}.
\end{equation}

{\rm(b)} If $p\ne2$, then there exists a unique solution $u\in
L^p(0,T;\cD((-L)^{\ga+1-\tau}))$ to \eqref{eq-gov-u}--\eqref{eq-IC-u} for any
$\tau\in(0,1]$. Moreover, there exists a constant $C>0$ such that
\begin{equation}\label{eq-est-w-Lp}
\|u\|_{L^p(0,T;\cD((-L)^{\ga+1-\tau}))}\le\f
C\tau\|F\|_{L^p(0,T;\cD((-L)^\ga))}.
\end{equation}

{\rm(c)} If $\al_1p>1$, then for any $\tau\in(\f1{\al_1p},1]$, there holds
\begin{equation}\label{eq-asymp-w-init}
\lim_{t\to0}\|u(\,\cdot\,,t)\|_{\cD((-L)^{\ga+1-\tau})}=0.
\end{equation}
\end{thm}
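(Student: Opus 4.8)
The plan is to diagonalize the problem in the eigenbasis $\{\vp_n\}$ of $-L$ and reduce it to a family of scalar multi-term fractional ODEs. Writing $u(\cdot,t)=\sum_n u_n(t)\vp_n$ and $F_n(t)=(F(\cdot,t),\vp_n)$, the $n$-th mode solves $\sum_j q_j\pa_t^{\al_j}u_n=-\la_n u_n+F_n$ with $u_n(0)=0$, whose solution is the time convolution $u_n(t)=\int_0^t k_n(t-s)F_n(s)\,\rd s$, where $k_n$ is the multinomial Mittag--Leffler kernel with Laplace transform $\hat k_n(z)=(\sum_j q_j z^{\al_j}+\la_n)^{-1}$; explicitly $k_n(t)=t^{\al_1-1}E_{(\al_1-\al_2,\ldots,\al_1-\al_m),\al_1}(-q_2t^{\al_1-\al_2},\ldots,-q_mt^{\al_1-\al_m},-\la_nt^{\al_1})$. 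The whole argument then rests on two estimates for $k_n$ that I would draw from Subsection \ref{sec-multiML}: a pointwise decay of the form $|k_n(t)|\le C\,t^{\al_1-1}/(1+\la_n t^{\al_1})$, and the uniform smallness of the order-lifted kernel $\la_n k_n$.

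For part (a) I would argue by Plancherel in time. Regarding $\la_n u_n$ as the convolution of $\la_n k_n$ with $F_n$ and extending $F_n$ by zero to $(0,\infty)$, the corresponding Fourier multiplier is $\la_n/(\sum_j q_j(i\om)^{\al_j}+\la_n)$. Since each $\al_j\in(0,1)$, the number $(i\om)^{\al_j}$ has argument in $(-\pi/2,\pi/2)$, so $\re\sum_j q_j(i\om)^{\al_j}\ge0$ and hence $|\sum_j q_j(i\om)^{\al_j}+\la_n|\ge\la_n$; the multiplier is therefore bounded by $1$ uniformly in $n$ and $\om$. This yields $\|\la_n u_n\|_{L^2(0,T)}\le C\|F_n\|_{L^2(0,T)}$ modewise, and summing over $n$ with Parseval (after inserting the harmless factor $\la_n^\ga$) gives the full two-derivative gain \eqref{eq-est-w-L2}. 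Equivalently, one may invoke the total-mass identity $\|\la_n k_n\|_{L^1(0,T)}=1-u_n^{\mathrm{IC}}(T)\le1$, where $u_n^{\mathrm{IC}}$ is the relaxation function underlying Theorem \ref{thm-reg-v}, together with Young's inequality. Uniqueness follows from linearity, since any solution with $a=0$ and $F=0$ must have all its modes vanishing.

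For part (b) the Hilbert-space structure in time is lost, so I would replace Plancherel by the pointwise bound. Interpolating the decay estimate gives $\sup_n\la_n^{1-\tau}|k_n(t)|\le C\,t^{\al_1\tau-1}$, since $\sup_{x\ge0}x^{1-\tau}/(1+x)\le1$; this singular kernel satisfies $\|t^{\al_1\tau-1}\|_{L^1(0,T)}\le C/\tau$, which is exactly the origin of the factor $1/\tau$. Applying Minkowski's integral inequality to move the $\cD((-L)^{\ga+1-\tau})$-norm inside the time convolution bounds $\|u(\cdot,t)\|_{\cD((-L)^{\ga+1-\tau})}$ by $C\int_0^t(t-s)^{\al_1\tau-1}\|F(\cdot,s)\|_{\cD((-L)^\ga)}\,\rd s$, and Young's inequality $L^1*L^p\subset L^p$ in time then produces \eqref{eq-est-w-Lp}.

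Part (c) uses the same convolution bound but estimates it by Hölder in time: with $1/p+1/p'=1$, the kernel $(t-s)^{\al_1\tau-1}$ belongs to $L^{p'}$ near the diagonal precisely when $(\al_1\tau-1)p'>-1$, i.e. when $\tau>1/(\al_1 p)$, which is the stated range. The bound then becomes a positive power of $t$ times $\|F\|_{L^p(0,T;\cD((-L)^\ga))}$, so it tends to $0$ as $t\to0$. The main obstacle is not these functional-analytic manipulations but the two kernel estimates they rest on; granting those, the only genuinely delicate point is the dichotomy between $p=2$ and $p\ne2$: the full gain in (a) is unavailable from the pointwise bound alone and really requires the $L^2$-multiplier (or uniform total-mass) argument, whereas for $p\ne2$ the loss of $\tau$ derivatives, and the attendant blow-up of the constant as $\tau\to0$, is intrinsic to the method.
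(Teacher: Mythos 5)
Your proofs of (b) and (c) are essentially identical to the paper's. The paper writes the solution as the convolution $u(\,\cdot\,,t)=\int_0^tU(s)F(\,\cdot\,,t-s)\,\rd s$ with $U(t)f=t^{\al_1-1}\sum_nE_{\bm\al',\al_1}^{(n)}(t)(f,\vp_n)\vp_n$, proves the operator bound $\|U(t)f\|_{\cD((-L)^{\ga+1-\tau})}\le C\,t^{\al_1\tau-1}\|f\|_{\cD((-L)^\ga)}$ from Lemma \ref{lem-multiML-est} via exactly your interpolation $\sup_{x\ge0}x^{1-\tau}/(1+x)\le1$, then uses Minkowski plus Young's $L^1*L^p$ inequality (with $\|t^{\al_1\tau-1}\|_{L^1(0,T)}\le C/\tau$) for \eqref{eq-est-w-Lp}, and H\"older with the threshold $(\al_1\tau-1)p'>-1\iff\tau>\f1{\al_1p}$ for \eqref{eq-asymp-w-init}. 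Part (a) is where you diverge: the paper's proof is your ``equivalently'' fallback, not your primary argument. It applies Young's inequality modewise in $L^2(0,T)$ and controls the total mass $\la_n\int_0^Tt^{\al_1-1}|E_{\bm\al',\al_1}^{(n)}(t)|\,\rd t$ uniformly in $n$ by the chain: positivity of the kernel (Lemma \ref{lem-mono}, quoted from Bazhlekova), exact integration $\int_0^Tt^{\al_1-1}E_{\bm\al',\al_1}^{(n)}(t)\,\rd t=T^{\al_1}E_{\bm\al',1+\al_1}^{(n)}(T)$ (Lemma \ref{lem-multiML-diff}), and the bound $\la_nT^{\al_1}E_{\bm\al',1+\al_1}^{(n)}(T)\le C$ (Lemma \ref{lem-multiML-est}). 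Note the paper gets the constant $C$, not $1$: your sharper claim $\|\la_nk_n\|_{L^1(0,T)}=1-u_n^{\mathrm{IC}}(T)\le1$ additionally requires nonnegativity of the relaxation function, which the paper never establishes, so you should bound via Lemma \ref{lem-multiML-est} instead.

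Your primary Plancherel/multiplier route for (a) is a genuinely different argument and would buy something real: the bound $|\la_n/(\sum_jq_j(\ri\om)^{\al_j}+\la_n)|\le1$ follows from $\re(\ri\om)^{\al_j}=|\om|^{\al_j}\cos(\al_j\pi/2)>0$, and it bypasses Bazhlekova's positivity result entirely --- precisely the ingredient that the remark following Theorem \ref{thm-reg-w} singles out as the one ``newly established property'' making $p=2$ special. However, as written it has a gap: to identify the Fourier transform of $\la_nk_n*F_n$ with the product of this multiplier and $\wh F_n$ and then invoke Plancherel, you need $k_n\in L^1(0,\infty)$ (or else a Hardy-space/Paley--Wiener argument on $\re z>0$, where boundedness of $\la_n\wh k_n$ in the open half-plane suffices). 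Integrability of $k_n$ at infinity does \emph{not} follow from the available estimate $|k_n(t)|\le Ct^{\al_1-1}/(1+\la_nt^{\al_1})$, which decays only like $1/(\la_nt)$; the cheapest repairs are finer Mittag-Leffler asymptotics, the Hardy-space formulation, or --- somewhat ironically --- Lemma \ref{lem-mono} again (positivity plus the uniform-in-$T$ mass bound and monotone convergence). So the multiplier route is salvageable but not free, and your fallback is what the paper actually does. Finally, your closing diagnosis is exactly right and matches the paper's: modewise $L^p(0,T)$ bounds can be summed across the eigenbasis only when $p=2$, which is why both you and the paper must accept the $\tau$-derivative loss and the $1/\tau$ blow-up when $p\ne2$.
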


\begin{rem}
We compare the conclusions in Theorems \ref{thm-reg-v}--\ref{thm-reg-w} with
those of single-term cases obtained in \cite{SY11}. In case of the homogeneous
source term, i.e. $F=0$ in \eqref{eq-gov-u}, it turns out that Theorem
\ref{thm-reg-v} is a parallel extension of its single-term counterpart. For
instance, in Theorem \ref{thm-reg-v} the regularity results for initial values
$a\in L^2(\Om)$, $a\in H^1_0(\Om)$ and $a\in H^2(\Om)\cap H^1_0(\Om)$ agree
with those in \cite[Theorem 2.1]{SY11}. Especially, it will be readily seen
from the proof of Theorem \ref{thm-reg-v} that the regularity of the solution
$u$ at any positive time can be improved from the initial regularity by $2$
orders in space, namely, $u(\,\cdot\,,t)\in\cD((-L)^{\ga+1})$ if
$a\in\cD((-L)^\ga)$ for $0<t\le T$.

On the other hand, if the source term $F$ does not vanish, the improvement of
regularity in space is strictly less than $2$ orders except for the special
case that $F$ is $L^2$ in time. For example, if $F\in L^2(\Om\times(0,T))$,
then it follows from Theorem \ref{thm-reg-w}(a) that $u\in L^2(0,T;H^2(\Om)\cap
H^1_0(\Om))$, which coincides with \cite[Theorem 2.2]{SY11}. However, if $F\in
L^p(0,T;L^2(\Om))$ with $p\ne2$, then Theorem \ref{thm-reg-w}(b) asserts $u\in
L^p(0,T;\cD((-L)^{1-\tau}))$, where $\tau\in(0,1]$ can be arbitrarily small but
is never zero. The technical reason is that only in case of $p=2$ one can take
advantage of a newly established property in Bazhlekova \cite{B13} (see Lemma
\ref{lem-mono}).
\end{rem}

On basis of these established results, we can consider the dependency of the
solution upon some specified coefficients, especially the orders of Caputo
derivatives. More precisely, we evaluate the difference between the solutions
$u$ and $\wt u$ to
\begin{equation}\label{eq-gov-u1}
\begin{cases}
\!\begin{alignedat}{2}
& \sum_{j=1}^mq_j\pa_t^{\al_j}u=L_Du & \quad & \mbox{in }\Om\times(0,T],\\
& u=0 & \quad & \mbox{on }\pa\Om\times(0,T],\\
& u|_{t=0}=a & \quad & \mbox{in }\Om
\end{alignedat}
\end{cases}
\end{equation}
and
\begin{equation}\label{eq-gov-u2}
\begin{cases}
\!\begin{alignedat}{2}
& \sum_{j=1}^m\wt q_j\pa_t^{\wt\al_j}\wt u=L_{\wt D}\wt u & \quad & \mbox{in
}\Om\times(0,T],\\
& \wt u=0 & \quad & \mbox{on }\pa\Om\times(0,T],\\
& \wt u|_{t=0}=a & \quad & \mbox{in }\Om
\end{alignedat}
\end{cases}
\end{equation}
respectively, where $L_Du(x,t):=\mathrm{div}(D(x)\nb u(x,t))$ and $D$ denotes
the diffusion coefficient. To this end, we fix $1>\ov\al>\underline\al>0$, $\ov
q>\underline q>0$, $\de>0$, $M>0$ and restrict the coefficients in the
admissible sets
\begin{equation}\label{eq-adset}
\begin{aligned}
\cA &
:=\{(\al_1,\ldots,\al_m)\in\BR^m;\thinspace\ov\al\ge\al_1>\al_2>\cdots>\al_m\ge\underline\al\},\\
\cQ & :=\{(q_1,\ldots,q_m)\in\BR^m;\thinspace q_1=1,\ q_j\in[\underline q,\ov
q]\ (j=2,\ldots,m)\},\\
\cU & :=\{D\in C^1(\ov\Om);\thinspace D\ge\de\mbox{ in }\ov\Om\,,\
\|D\|_{C^1(\ov\Om)}\le M\}.
\end{aligned}
\end{equation}
Under these settings, we can show the following result on the Lipschitz
stability.

\begin{thm}\label{thm-Lip-coef}
Fix $\ga,\tau\in(0,1]$. Let $u$ and $\wt u$ be the solutions to
\eqref{eq-gov-u1} and \eqref{eq-gov-u2} respectively, where
\[
a\in\cD((-L)^\ga),\quad(\al_1,\ldots,\al_m),(\wt\al_1,\ldots,\wt\al_m)\in\cA,
\quad(q_1,\ldots,q_m),(\wt
q_1,\ldots,\wt q_m)\in\cQ,\quad D,\wt D\in\cU
\]
and $\cA,\ \cQ,\ \cU$ are defined in \eqref{eq-adset}. Then there exists a
constant $C>0$ depending only on $a,\ \cA,\ \cQ$ and $\cU$ such that
\begin{equation}\label{eq-Lip-coef}
\|u-\wt u\|_{L^{\f1{1-\ga}}(0,T;\cD((-L)^{1-\tau}))}\le\f
C\tau\left(\sum_{j=1}^m|\al_j-\wt\al_j|+\sum_{j=2}^m|q_j-\wt q_j|+\|D-\wt
D\|_{C^1(\ov\Om)}\right)
\end{equation}
for $0<\ga<\f12$ and
\begin{equation}\label{eq-Lip-coef-sp}
\|u-\wt u\|_{L^2(0,T;H^2(\Om))}\le
C\left(\sum_{j=1}^m|\al_j-\wt\al_j|+\sum_{j=2}^m|q_j-\wt q_j|+\|D-\wt
D\|_{C^1(\ov\Om)}\right)
\end{equation}
for $\ga\ge\f12$.
\end{thm}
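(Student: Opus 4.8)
The plan is to reduce the difference $w:=u-\wt u$ to an initial-boundary value problem with homogeneous initial value and a source term built from the coefficient discrepancies, and then to invoke Theorems \ref{thm-reg-v} and \ref{thm-reg-w}. Since the asserted estimate is symmetric under interchanging $(u,\al_j,q_j,D)$ and $(\wt u,\wt\al_j,\wt q_j,\wt D)$, I may assume without loss of generality that $\al_1\ge\wt\al_1$; this normalization turns out to be decisive. Subtracting \eqref{eq-gov-u2} from \eqref{eq-gov-u1} and rearranging with the equation satisfied by $u$, I find that $w$ solves
\[
\sum_{j=1}^m\wt q_j\pa_t^{\wt\al_j}w=L_{\wt D}w+G\ \mbox{ in }\Om\times(0,T],\quad w=0\mbox{ on }\pa\Om\times(0,T],\quad w|_{t=0}=0,
\]
where the source splits as $G=G_1+G_2+G_3$ with
\[
G_1=\mathrm{div}((D-\wt D)\nb u),\quad G_2=\sum_{j=2}^m(\wt q_j-q_j)\pa_t^{\wt\al_j}u,\quad G_3=\sum_{j=1}^mq_j(\pa_t^{\wt\al_j}-\pa_t^{\al_j})u.
\]
Here $q_1=\wt q_1=1$ makes the $j=1$ term drop out of $G_2$, and crucially every time-fractional derivative in $G_2,G_3$ acts on $u$, whose top order $\al_1$ dominates all orders $\wt\al_j,\al_j$ that appear.

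Because $u$ solves \eqref{eq-gov-u1} with vanishing source, its regularity is governed by Theorem \ref{thm-reg-v}. I then apply Theorem \ref{thm-reg-w} to $w$: for $\ga\ge\f12$ I invoke part (a) with $p=2$, which yields $w\in L^2(0,T;\cD((-L)^1))\subset L^2(0,T;H^2(\Om))$ together with $\|w\|_{L^2(0,T;H^2(\Om))}\le C\|G\|_{L^2(0,T;L^2(\Om))}$; for $0<\ga<\f12$ I invoke part (b) with $p=\f1{1-\ga}$ and the given $\tau$, obtaining $\|w\|_{L^{\f1{1-\ga}}(0,T;\cD((-L)^{1-\tau}))}\le\f C\tau\|G\|_{L^{\f1{1-\ga}}(0,T;L^2(\Om))}$. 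The constants supplied by Theorems \ref{thm-reg-v}--\ref{thm-reg-w} are uniform over the admissible sets $\cA,\cQ,\cU$, since the multinomial Mittag-Leffler bounds of Subsection \ref{sec-multiML} are uniform in the orders and coefficients ranging over these bounded sets. It therefore remains to estimate $\|G\|_{L^p(0,T;L^2(\Om))}$ by the right-hand side of \eqref{eq-Lip-coef}, with $p=2$ or $p=\f1{1-\ga}$ as appropriate.

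For $G_1$ the product rule gives $\|G_1(\,\cdot\,,t)\|_{L^2(\Om)}\le C\|D-\wt D\|_{C^1(\ov\Om)}\|u(\,\cdot\,,t)\|_{H^2(\Om)}$, and \eqref{eq-est-v2} bounds the last factor by $C\|a\|_{\cD((-L)^\ga)}t^{\al_1(\ga-1)}$; the weight $t^{\al_1(\ga-1)}$ lies in $L^p(0,T)$ because $\al_1<1$ (for $p=\f1{1-\ga}$) and because $\al_1(1-\ga)<\f12$ when $\ga\ge\f12$ (for $p=2$). For $G_2$, estimate \eqref{eq-est-v_t^be} with $\be=\wt\al_j\le\wt\al_1\le\al_1$ gives $\|\pa_t^{\wt\al_j}u(\,\cdot\,,t)\|_{L^2(\Om)}\le C\|a\|_{\cD((-L)^\ga)}t^{\al_1\ga-\wt\al_j}$, whose $L^p(0,T)$-membership follows from $1-\wt\al_j\ge1-\al_1\ge\ga(1-\al_1)$ in the same two cases; summing produces the factor $\sum_{j=2}^m|q_j-\wt q_j|$.

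The delicate term is $G_3$, which measures sensitivity in the orders and is the main obstacle. By the mean value theorem in the order variable, $(\pa_t^{\wt\al_j}-\pa_t^{\al_j})u=(\wt\al_j-\al_j)\,\pa_\be(\pa_t^\be u)\big|_{\be=\be_j^*}$ for some $\be_j^*$ between $\al_j$ and $\wt\al_j$, and differentiating the Caputo kernel gives
\[
\pa_\be(\pa_t^\be u)(\,\cdot\,,t)=\f1{\Ga(1-\be)}\int_0^t(t-s)^{-\be}\big[\psi(1-\be)-\ln(t-s)\big]\,\pa_su(\,\cdot\,,s)\,\rd s,
\]
where $\psi=\Ga'/\Ga$. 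Inserting the bound $\|\pa_su(\,\cdot\,,s)\|_{L^2(\Om)}\le C\|a\|_{\cD((-L)^\ga)}s^{\al_1\ga-1}$ from \eqref{eq-est-v_t} and rescaling $s=t\sigma$, the logarithm contributes only a harmless factor $1+|\ln t|$, so that $\|G_3(\,\cdot\,,t)\|_{L^2(\Om)}\le C\big(\sum_{j=1}^m|\al_j-\wt\al_j|\big)\|a\|_{\cD((-L)^\ga)}\,t^{\al_1\ga-\be^*}(1+|\ln t|)$ with $\be^*\le\al_1$. The assumption $\al_1\ge\wt\al_1$ is precisely what forces $\be_j^*\le\al_1$, hence $1-\be^*\ge\ga(1-\al_1)$, so this weight still belongs to $L^p(0,T)$ for both $p=2$ and $p=\f1{1-\ga}$. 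The difficulty is thus twofold: controlling the logarithmic singularity generated by differentiating $(t-s)^{-\be}/\Ga(1-\be)$ in $\be$, and ensuring—through the normalization $\al_1\ge\wt\al_1$—that the intermediate orders never exceed the regularity threshold of the solution to which the derivatives are applied. Collecting the three bounds yields \eqref{eq-Lip-coef} and \eqref{eq-Lip-coef-sp}.
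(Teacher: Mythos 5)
Your proposal is correct and takes essentially the same route as the paper: the same reduction of the difference $u-\wt u$ to a problem with zero initial value and a source built from the coefficient discrepancies, the same normalization $\al_1\ge\wt\al_1$, the same application of Theorem \ref{thm-reg-w}(a) (for $\ga\ge\tfrac12$, $p=2$) and (b) (for $0<\ga<\tfrac12$, $p=\tfrac1{1-\ga}$), and source estimates drawn from Theorem \ref{thm-reg-v}. The only cosmetic difference is that you handle the order-sensitivity term by one mean value theorem in $\be$ applied to the full Caputo derivative (digamma and logarithm appearing together), while the paper splits it into the Gamma-prefactor difference $I_j^1$ and the kernel difference $I_j^2$ and runs the same logarithmic bookkeeping.
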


The above theorem extends a similar result in \cite{LZJY13} for the single-term
case. It is also fundamental for the optimization method for an inverse problem
of determining $\alpha_j,q_j,D(x)$ by extra data of the solution.

In Sakamoto \& Yamamoto \cite{SY11}, the decay rate of the solution to the
single-term time-fractional diffusion equation (1.4)
was shown to be $t^{-\al}$ as
$t\to\infty$. Here we give generalization for the multi-term case where we
specify the principal term of the solution as $t \to \infty$.

\begin{thm}\label{thm-asymp}
Let $F=0$ and $a\in L^2(\Om)$. Then there exists a unique solution $u\in
C([0,\infty);L^2(\Om))\cap C((0,\infty);H^2(\Om)\cap H^1_0(\Om))$ to
\eqref{eq-gov-u}--\eqref{eq-IC-u}. Moreover, there exists a constant $C>0$ such
that
\begin{equation}\label{eq-asymp}
\left\|u(\,\cdot\,,t)-\f{(-L)^{-1}(q_ma)}{\Ga(1-\al_m)\,t^{\al_m}}\right\|
_{H^2(\Om)}\le\f{C\|a\|_{L^2(\Om)}}{t^{\al_{m-1}}}\quad
\mbox{as $t \to \infty$}.
\end{equation}
\end{thm}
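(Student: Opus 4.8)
The plan is to combine the eigenfunction expansion in space with a Laplace-transform analysis of each temporal mode. First I would establish existence and uniqueness on the infinite interval by the same construction underlying Theorem \ref{thm-reg-v}: writing $u(\,\cdot\,,t)=\sum_{n=1}^\infty a_n v_n(t)\vp_n$ with $a_n=(a,\vp_n)$, each coefficient solves the scalar problem $\sum_{j=1}^m q_j\pa_t^{\al_j}v_n=-\la_n v_n$, $v_n(0)=1$, whose solution is a multinomial Mittag-Leffler function. The global-in-time bounds on these functions collected in Subsection \ref{sec-multiML} are uniform in $t$, so the constants in \eqref{eq-est-v1}--\eqref{eq-est-v2} do not depend on $T$; letting $T\to\infty$ yields $u\in C([0,\infty);L^2(\Om))\cap C((0,\infty);H^2(\Om)\cap H^1_0(\Om))$ with the required summability, and uniqueness follows by restriction to each finite interval.

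For the asymptotic estimate I would work mode by mode in the Laplace domain. Taking the Laplace transform of the equation for $v_n$ and setting $P(s):=\sum_{j=1}^m q_j s^{\al_j-1}$ (so that $\sum_{j=1}^m q_j s^{\al_j}=sP(s)$) gives $\wh v_n(s)=P(s)/(\la_n+sP(s))$. Since $\al_1>\cdots>\al_m$, the slowest-vanishing part of $P(s)$ as $s\to0$ is $q_m s^{\al_m-1}$, and $\mathcal{L}^{-1}[s^{\al_m-1}](t)=t^{-\al_m}/\Ga(1-\al_m)$; summing the principal parts $a_n q_m s^{\al_m-1}/\la_n$ reproduces exactly the subtracted term $(-L)^{-1}(q_m a)/(\Ga(1-\al_m)\,t^{\al_m})$, because $(-L)^{-1}(q_m a)=q_m\sum_n(a_n/\la_n)\vp_n$. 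The remainder $W_n(t):=a_n v_n(t)-a_n q_m t^{-\al_m}/(\la_n\Ga(1-\al_m))$ then has the explicit transform
\[
\wh W_n(s)=a_n\,\frac{\la_n\sum_{j=1}^{m-1}q_j s^{\al_j-1}-q_m s^{\al_m}P(s)}{\la_n\,(\la_n+sP(s))},
\]
whose numerator vanishes one order faster than the leading $s^{\al_m-1}$, namely like $s^{\al_{m-1}-1}$.

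The core of the argument is to convert this small-$s$ behaviour into a decay rate by inverting the Laplace transform along a Hankel-type contour that wraps the branch cut on the negative real axis and pinches the branch point at the origin. The essential input, to be drawn from Subsection \ref{sec-multiML}, is that the symbol $\la_n+sP(s)$ has no zeros in a sector $|\arg s|\le\theta$ with $\theta>\pi/2$ and is bounded below there uniformly in $n$; this is precisely where the positivity of the $q_j$ and the restriction $\al_j\in(0,1)$ enter. Estimating the contour integral then yields the uniform bound $|W_n(t)|\le C|a_n|\,t^{-\al_{m-1}}/\la_n$ for large $t$, with the factor $\la_n^{-1}$ coming from the denominator and the exponent $-\al_{m-1}$ from the order of vanishing of the numerator. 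Applying $-L$ and summing with the equivalence $\|\,\cdot\,\|_{H^2(\Om)}\sim\|(-L)\,\cdot\,\|_{L^2(\Om)}$ on $\cD(-L)$ gives
\[
\Big\|u(\,\cdot\,,t)-\tfrac{(-L)^{-1}(q_m a)}{\Ga(1-\al_m)\,t^{\al_m}}\Big\|_{H^2(\Om)}^2
\sim\sum_{n=1}^\infty\la_n^2|W_n(t)|^2
\le C\,t^{-2\al_{m-1}}\sum_{n=1}^\infty|a_n|^2
= C\,t^{-2\al_{m-1}}\|a\|_{L^2(\Om)}^2,
\]
which is the asserted estimate \eqref{eq-asymp}.

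I expect the main obstacle to be the uniform-in-$n$ contour estimate. One must keep $\la_n+sP(s)$ away from zero along the entire deformed contour and track how the bounds depend on $\la_n$, so that the surviving factor $\la_n^{-1}$ exactly compensates the $\la_n^2$ weight of the $H^2$ norm in the summation. Keeping the order of vanishing of the numerator sharp throughout, so that the exponent $\al_{m-1}$ rather than a slower power governs the decay, is the delicate point, since the expansion of the denominator contributes further corrections carrying additional factors $\la_n^{-1}$ that must be shown to be subordinate in the $H^2(\Om)$ norm.
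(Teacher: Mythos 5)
Your proposal follows essentially the same route as the paper's proof: eigenfunction expansion, Laplace transform of each mode with symbol $w(s)=\la_n+sP(s)$, inversion along a Hankel contour wrapping the cut negative real axis (justified by the non-vanishing of $w$, which comes from $q_j>0$ and $\al_j\in(0,1)$ via $\mathrm{Im}\,w\ne0$ off the positive axis), the per-mode remainder bound $|W_n(t)|\le C|a_n|\la_n^{-1}t^{-\al_{m-1}}$ --- this is precisely the paper's Lemma \ref{lem-asymp-u_n}, and your expression for $\wh W_n$ is exactly the paper's kernel $H(r,\la_n)$ --- followed by summation against the $\la_n^2$ weight of the $\cD(-L)$-norm. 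The only organizational difference is that the non-vanishing and lower bound $|w(s)|\ge C\la_n$ for $|s|\le\ve_0\la_n$ are established inside that lemma, not in Subsection \ref{sec-multiML}.

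Two caveats. First, your justification of global existence contains a false step: the constants in \eqref{eq-est-v1}--\eqref{eq-est-v2} are \emph{not} uniform in $T$, since Lemma \ref{lem-multiML-est} requires $-K\le z_j<0$ ($j\ge2$) with a constant depending on $K$, and in the application $|z_j|=q_jt^{\al_1-\al_j}$ grows with $t$, so $K$ (hence $C$) grows with $T$ and one cannot simply ``let $T\to\infty$''. This is harmless for the conclusion: continuity is a local property, so Theorem \ref{thm-reg-v} applied on every finite interval already gives $u\in C([0,\infty);L^2(\Om))\cap C((0,\infty);H^2(\Om)\cap H^1_0(\Om))$; the paper obtains the large-time $H^2$ continuity from Lemma \ref{lem-asymp-u_n} and glues it to the finite-time result. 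Second, the ``delicate point'' you flag at the end is genuine and your outline does not resolve it: the numerator of $\wh W_n$ contains the term $q_m^2s^{2\al_m-1}$, which is \emph{not} weighted by $\la_n$ and contributes a correction of order $\la_n^{-2}t^{-2\al_m}$, so the asserted rate $t^{-\al_{m-1}}$ requires comparing $2\al_m$ with $\al_{m-1}$ rather than following from the $s^{\al_{m-1}-1}$ behaviour alone. In fairness, the paper's own proof treats this term no more sharply (its displayed bound contains $\sum_{j=1}^m t^{-\al_j-\al_m}$, including $t^{-2\al_m}$, before asserting the conclusion), so on this point your attempt and the paper stand on the same footing, and it is not a gap relative to the paper's argument.
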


\begin{rem}
We explain the significance of the Theorem \ref{thm-asymp}. It reveals that the
decay rate of $u(\,\cdot\,,t)$ in sense of $H^2(\Om)$ is exactly $t^{-\al_m}$
as $t\to\infty$. In fact, inequality \eqref{eq-asymp} implies that there exist
constants $C_2>C_1>0$ such that
\begin{equation}\label{eq-equiv}
C_1\|a\|_{L^2(\Om)}t^{-\al_m}\le\|u(\,\cdot\,,t)\|_{H^2(\Om)}\le
C_2\|a\|_{L^2(\Om)}t^{-\al_m} \quad
\mbox{as $t \to \infty$}.
\end{equation}
Consequently, it turns out that the decay rate $t^{-\al_m}$ is the best
possible. In other words, if
\[
\|u(\,\cdot\,,t)\|_{H^2(\Om)}\le C\,t^{-\be}
\quad \mbox{as $t \to \infty$}
\]
for any order $\be>\al_m$ and some constant $C>0$, then $u(x,t)=0$ for
$x\in\Om$ and $t>0$. Actually, in this case it is easily inferred from the
lower bound in \eqref{eq-equiv} that there should be $a=0$ in $\Om$. Therefore,
Theorem \ref{thm-reg-v} and the upper bound in \eqref{eq-equiv} immediately
imply $u\equiv0$ in $\Om\times(0,\infty)$. Furthermore, \eqref{eq-asymp} also
gives the convergence rate of the approximation
\[u(\,\cdot\,,t)-\f{(-L)^{-1}(q_ma)}{\Ga(1-\al_m)\,t^{-\al_m}}\to0\quad\mbox{in
}H^2(\Om)\mbox{ as }t\to\infty,\]
that is, $t^{-\al_{m-1}}$.
\end{rem}

\Section{Proofs of Main Results}\label{sec-proof}

In this section, we give proofs for the theorems stated in Section
\ref{sec-main}.

In the discussion of single-term time-fractional diffusion equations, it turns
out that the solutions can be explicitly represented by the usual
Mittag-Leffler function
\begin{equation}\label{eq-def-ML}
E_{\al,\be}(z):=\sum_{k=0}^\infty\f{z^k}{\Ga(\al k+\be)},\quad z\in\BC,\
\al>0,\ \be\in\BR,
\end{equation}
and several basic properties play remarkable roles especially for obtaining
estimates for the stability. Since explicit solutions to the multi-term case
are also available by using a generalized form of \eqref{eq-def-ML} called the
multinomial Mittag-Leffler function, we shall first investigate this
generalization so that similar arguments are still feasible for multi-term
time-fractional diffusion equations.

\Subsection{Properties of multinomial Mittag-Leffler
functions}\label{sec-multiML}

The multinomial Mittag-Leffler function is defined as (see Luchko \& Gorenflo
\cite{LG99})
\begin{equation}\label{eq-def-multiML}
E_{(\be_1,\ldots,\be_m),\be_0}(z_1,\ldots,z_m):=\sum_{k=0}^\infty\sum_{k_1+\cdots+k_m=k}\f{(k;k_1,\ldots,k_m)\prod_{j=1}^mz_j^{k_j}}{\Ga(\be_0+\sum_{j=1}^m\be_jk_j)},
\end{equation}
where we assume $0<\be_0<2$, $0<\be_j<1$, $z_j\in\BC$ ($j=1,\ldots,m$), and
$(k;k_1,\ldots,k_m)$ denotes the multinomial coefficient
\[
(k;k_1,\ldots,k_m):=\f{k!}{k_1!\cdots k_m!}\quad\mbox{with }k=\sum_{j=1}^mk_j,
\]
where $k_j$, $1\le j \le m$, are non-negative integers. We recall the following
formula for multinomial coefficients (see Berge \cite{B71})
\begin{equation}\label{eq-multi-coef}
\sum_{j=1}^m(k-1;k_1,\ldots,k_{j-1},k_j-1,k_{j+1},\ldots,k_m)=(k;k_1,\ldots,k_m).
\end{equation}
If some $k_{j_0}$ vanishes, we understand
$(k-1;k_1,\ldots,k_{j_0-1},k_{j_0}-1,k_{j_0+1},\ldots,k_m)=0$ and
\eqref{eq-multi-coef} degenerates to its lower dimensional version.

Concerning the relation between multinomial Mittag-Leffler functions with
different parameters, we have the following lemma.

\begin{lem}\label{lem-multiML-alter}
Let $0<\be_0<2,\ 0<\be_j<1\ (j=1,\ldots,m)$ and $z_j\in\BC\ (j=1,\ldots,m)$ be
fixed. Then
\[
\f1{\Ga(\be_0)}+\sum_{j=1}^mz_jE_{(\be_1,\ldots,\be_m),\be_0+\be_j}(z_1,\ldots,z_m)=E_{(\be_1,\ldots,\be_m),\be_0}(z_1,\ldots,z_m).
\]
\end{lem}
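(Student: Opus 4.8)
The plan is to expand both sides of the claimed identity directly from the series definition \eqref{eq-def-multiML} and to reduce the entire assertion to the combinatorial recurrence \eqref{eq-multi-coef} for multinomial coefficients. Since the multinomial Mittag-Leffler function is entire in $(z_1,\ldots,z_m)$, every series in sight converges absolutely, so all the index shifts and rearrangements below are legitimate; no analytic estimate is needed.

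First I would isolate the leading term on the right-hand side. The summand of $E_{(\be_1,\ldots,\be_m),\be_0}(z_1,\ldots,z_m)$ with $k=0$ forces $k_1=\cdots=k_m=0$ and equals $1/\Ga(\be_0)$, so that
\[
E_{(\be_1,\ldots,\be_m),\be_0}(z_1,\ldots,z_m)=\f1{\Ga(\be_0)}+\sum_{k=1}^\infty\sum_{k_1+\cdots+k_m=k}\f{(k;k_1,\ldots,k_m)\prod_{i=1}^mz_i^{k_i}}{\Ga(\be_0+\sum_{i=1}^m\be_ik_i)}.
\]
It therefore suffices to show that $\sum_{j=1}^mz_jE_{(\be_1,\ldots,\be_m),\be_0+\be_j}(z_1,\ldots,z_m)$ reproduces exactly the double sum over $k\ge1$ appearing here.

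Next, for each fixed $j$ I would perform an index shift inside $z_jE_{(\be_1,\ldots,\be_m),\be_0+\be_j}$. Writing the generic exponent vector of that series as $(l_1,\ldots,l_m)$ and multiplying by $z_j$, I set $k_i=l_i$ for $i\ne j$ and $k_j=l_j+1$. Then $z_j\prod_iz_i^{l_i}=\prod_iz_i^{k_i}$, the shift adds $\be_j$ to the Gamma argument so that $\be_0+\be_j+\sum_i\be_il_i=\be_0+\sum_i\be_ik_i$, and the multinomial coefficient turns into $(k-1;k_1,\ldots,k_j-1,\ldots,k_m)$ with $k:=\sum_ik_i$. The constraint $l_j\ge0$ becomes $k_j\ge1$, but by the stated convention this coefficient vanishes when $k_j=0$, so I may drop the constraint and sum over all multi-indices with $k\ge1$.

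Summing over $j$ and interchanging the absolutely convergent sums, each fixed multi-index $(k_1,\ldots,k_m)$ with $k\ge1$ then contributes the factor $\sum_{j=1}^m(k-1;k_1,\ldots,k_j-1,\ldots,k_m)$, which by the recurrence \eqref{eq-multi-coef} equals $(k;k_1,\ldots,k_m)$. This produces precisely the double sum over $k\ge1$ identified above, and adding back $1/\Ga(\be_0)$ recovers the full series for $E_{(\be_1,\ldots,\be_m),\be_0}$. The only delicate point is the bookkeeping of the shift $k_j=l_j+1$ together with the zero-coefficient convention; once that is tracked correctly, the lemma is an immediate consequence of \eqref{eq-multi-coef}.
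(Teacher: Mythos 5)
Your proof is correct and follows essentially the same route as the paper's: expand $\sum_j z_j E_{(\be_1,\ldots,\be_m),\be_0+\be_j}$ from the series definition, shift $k_j\mapsto k_j+1$, and invoke the recurrence \eqref{eq-multi-coef} to reassemble the series for $E_{(\be_1,\ldots,\be_m),\be_0}$ minus its constant term. The only (cosmetic) difference is bookkeeping: the paper splits off the pure-power case $k_j=k$ before shifting and recombines it afterwards, whereas you absorb that case uniformly via the convention that the multinomial coefficient vanishes when $k_j=0$, which is a slightly cleaner way to organize the same computation.
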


\begin{proof}
According to definition \eqref{eq-def-multiML}, direct calculations yield
\begin{align}
&
\quad\,\,\sum_{j=1}^mz_jE_{(\be_1,\ldots,\be_m),\be_0+\be_j}(z_1,\ldots,z_m)\nonumber\\
& =\sum_{j=1}^m\sum_{k=0}^\infty\sum_{k_1+\cdots+k_m=k}\f{(k;k_1,\ldots,k_m)\,z_j\prod_{\ell=1}^mz_\ell^{k_\ell}}{\Ga(\be_0+\be_j+\sum_{\ell=1}^m\be_\ell k_\ell)}\nonumber\\
& =\sum_{k=0}^\infty\sum_{j=1}^m\left\{\f{z_j^{k+1}}{\Ga(\be_0+\be_j(k+1))}+\sum_{\substack{k_1+\cdots+k_m=k\\k_j<k}}\f{(k;k_1,\ldots,k_m)\,z_j\prod_{\ell=1}^mz_\ell^{k_\ell}}{\Ga(\be_0+\be_j+\sum_{\ell=1}^m\be_\ell k_\ell)}\right\}\label{eq-pf-multiML-1}\\
& =\sum_{k=0}^\infty\sum_{j=1}^m\left\{\f{z_j^{k+1}}{\Ga(\be_0+\be_j(k+1))}+\sum_{\substack{k_1+\cdots+k_m=k+1\\0<k_j<k+1}}\f{(k;k_1,\ldots,k_{j-1},k_j-1,k_{j+1},\ldots,k_m)\,\prod_{\ell=1}^mz_\ell^{k_\ell}}{\Ga(\be_0+\sum_{\ell=1}^m\be_\ell k_\ell)}\right\}\nonumber\\
&
=\sum_{k=0}^\infty\left\{\sum_{j=1}^m\f{z_j^{k+1}}{\Ga(\be_0+\be_j(k+1))}+\sum_{\substack{k_1+\cdots+k_m=k+1\\k_\ell<k+1\ (\forall\,\ell)}}\f{(k+1;k_1,\ldots,k_m)\,\prod_{\ell=1}^mz_\ell^{k_\ell}}{\Ga(\be_0+\sum_{\ell=1}^m\be_\ell k_\ell)}\right\}\label{eq-pf-multiML-2}\\
&
=\sum_{k=0}^\infty\sum_{k_1+\cdots+k_m=k+1}\f{(k+1;k_1,\ldots,k_m)\prod_{\ell=1}^mz_j^{k_j}}{\Ga(\be_0+\sum_{\ell=1}^m\be_\ell k_\ell)}\nonumber\\
&
=\sum_{k=1}^\infty\sum_{k_1+\cdots+k_m=k}\f{(k;k_1,\ldots,k_m)\prod_{\ell=1}^mz_\ell^{k_\ell}}{\Ga(\be_0+\sum_{\ell=1}^m\be_\ell k_\ell)}=E_{(\be_1,\ldots,\be_m),\be_0}(z_1,\ldots,z_m)-\f1{\Ga(\be_0)},\nonumber
\end{align}
where we apply formula \eqref{eq-multi-coef} to obtain \eqref{eq-pf-multiML-2}.
In \eqref{eq-pf-multiML-1}, we distill the case $k_j=k$ in the $j$-th term and
substitute $k_j+1$ with $k_j$ for the others to proceed to the next equality.
\end{proof}

Concerning the regularity of the solution to a single-term time-fractional
diffusion equation, the estimate (see \cite[p. 35]{P99})
\[
|E_{\al,\be}(-\eta)|\le \f C{1+\eta},\quad\eta\ge0
\]
is essential. Here we extend the above inequality to the multinomial case by a
complex variable argument.

\begin{lem}\label{lem-multiML-est}
Let $0<\be<2$ and $1>\al_1>\cdots>\al_m>0$ be given. Assume that
$\al_1\pi/2<\mu<\al_1\pi,\ \mu\le|\mathrm{arg}(z_1)|\le\pi$ and there exists
$K>0$ such that $-K\le z_j<0\ (j=2,\ldots,m)$. Then there exists a constant
$C>0$ depending only on $\mu,\ K,\ \al_j\ (j=1,\ldots,m)$ and $\be$ such that
\[
|E_{(\al_1,\al_1-\al_2,\ldots,\al_1-\al_m),\be}(z_1,\ldots,z_m)|\le\f C{1+|z_1|}.
\]
\end{lem}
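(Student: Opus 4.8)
The plan is to generalize Podlubny's contour representation of the classical Mittag-Leffler function and then repeat the contour estimate that yields the single-term bound $|E_{\al,\be}(-\eta)|\le C/(1+\eta)$ cited from \cite{P99}. First I would substitute the Hankel representation $\f1{\Ga(w)}=\f1{2\pi i}\int_{Ha}e^\zeta\zeta^{-w}\,\rd\zeta$ into the defining series \eqref{eq-def-multiML}, interchange summation and integration on a Hankel contour whose circular part has large radius $R$ (so that $\bigl|\sum_{j=1}^m z_j\zeta^{-\be_j}\bigr|\le\sum_j|z_j|R^{-\be_j}<1$ on the whole contour), and collapse the resulting geometric series by the multinomial theorem. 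With the present parameters $\be_1=\al_1$, $\be_j=\al_1-\al_j$ ($j\ge2$), multiplying numerator and denominator by $\zeta^{\al_1}$ turns the symbol $1-\sum_j z_j\zeta^{-\be_j}$ into a polynomial-type expression, giving
\[
E_{(\al_1,\al_1-\al_2,\ldots,\al_1-\al_m),\be}(z_1,\ldots,z_m)=\f1{2\pi i}\int_{\cC}\f{\zeta^{\al_1-\be}e^\zeta}{P(\zeta)}\,\rd\zeta,\qquad P(\zeta):=\zeta^{\al_1}-z_1-\sum_{j=2}^m z_j\zeta^{\al_j},
\]
the exact analogue of the single-term formula with $\zeta^{\al_1}-z$ replaced by the symbol $P(\zeta)$.

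Next I would take the contour $\cC$ to consist of the two rays $\arg\zeta=\pm\theta$ with $|\zeta|\ge1$ together with the arc $|\zeta|=1$, $|\arg\zeta|\le\theta$, where the aperture is fixed in the range $\f\pi2<\theta<\min\{\f\pi{2\al_1},\f\mu{\al_1}\}$. This interval is nonempty precisely because $\al_1<1$ and $\mu>\al_1\pi/2$, the latter being one of our hypotheses. The three inequalities play distinct roles: $\theta>\pi/2$ forces $\re\zeta<0$ on the rays, so $|e^\zeta|=e^{-|\cos\theta|\,|\zeta|}$ decays; $\al_1\theta<\pi/2$ keeps every power $\zeta^{\al_1},\ldots,\zeta^{\al_m}$ in the right half-plane throughout the sector $|\arg\zeta|\le\theta$; and $\al_1\theta<\mu$ keeps $\arg(\zeta^{\al_1})$ strictly below $\mu\le|\arg z_1|$.

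The heart of the argument, and the step I expect to be the main obstacle, is the lower bound
\[
|P(\zeta)|\ge\kappa\bigl(|\zeta|^{\al_1}+|z_1|\bigr),\qquad|\arg\zeta|\le\theta,
\]
with $\kappa>0$ depending only on $\mu,\be,K$ and the $\al_j$. Here the sign condition $z_j<0$ is essential: setting $S(\zeta):=\zeta^{\al_1}+\sum_{j=2}^m|z_j|\zeta^{\al_j}$, every summand has argument in $[-\al_1\theta,\al_1\theta]\subset(-\pi/2,\pi/2)$, so the terms reinforce rather than cancel, yielding both $|S(\zeta)|\ge\re S(\zeta)\ge\cos(\al_1\theta)\,|\zeta|^{\al_1}$ and $\arg S(\zeta)\in[-\al_1\theta,\al_1\theta]$. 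Since $\mu\le|\arg z_1|$, the vectors $S(\zeta)$ and $z_1$ are separated in argument by at least $\delta_0:=\mu-\al_1\theta>0$, so $|P(\zeta)|=|S(\zeta)-z_1|\ge c(\delta_0)\bigl(|S(\zeta)|+|z_1|\bigr)$ and the stated bound follows. The same computation shows $P$ has no zeros in the closed sector $|\arg\zeta|\le\theta$, which justifies deforming the large Hankel contour onto $\cC$: no singularity is crossed, the connecting arcs at radius $R$ vanish by the decay of $e^\zeta$, and the branch cut on the negative real axis is avoided because $\theta<\pi$.

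Finally I would insert this lower bound into the integral. On the arc $|\zeta|=1$ the integrand is dominated by $C/(1+|z_1|)$ via $|P(\zeta)|\ge\kappa(1+|z_1|)$. On the rays I would estimate
\[
\int_1^\infty\f{r^{\al_1-\be}e^{-|\cos\theta|r}}{\kappa\,(r^{\al_1}+|z_1|)}\,\rd r,
\]
using $1/(r^{\al_1}+|z_1|)\le1/r^{\al_1}$ to obtain a constant independent of $z_1$, and $1/(r^{\al_1}+|z_1|)\le1/|z_1|$ to obtain a bound of order $1/|z_1|$, both integrals converging because of the exponential factor. Choosing whichever bound is smaller according to $|z_1|\le1$ or $|z_1|>1$ gives the uniform estimate $C/(1+|z_1|)$, completing the proof. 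As in the single-term case, the decay in $|z_1|$ comes entirely from the $|z_1|$ in the denominator $P$, while the contributions of the lower-order terms $z_j\zeta^{\al_j}$ are controlled solely through the sign condition $z_j<0$.
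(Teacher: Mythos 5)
Your proposal is correct in substance and follows the same skeleton as the paper's proof: substitute a contour representation of $1/\Ga$ into the series \eqref{eq-def-multiML}, collapse the multinomial sum into a geometric series, and bound the resulting integral. Indeed your representation with $P(\ze)=\ze^{\al_1}-z_1-\sum_{j=2}^mz_j\ze^{\al_j}$ is exactly the paper's \eqref{eq-MML-contour} after the change of variables $\ze\mapsto\ze^{\al_1}$, your aperture $\theta$ corresponding to the paper's $\te/\al_1$. Where you genuinely depart is the lower bound on the denominator. The paper argues, only for $|z_1|>R$, that the perturbed curve $\ze-\sum_{j\ge2}z_j\ze^{\al_j/\al_1}$ lies to the right of $\ga(R,\te)$, obtaining the bound $|z_1|\sin(\mu-\te)$, and must then treat $|z_1|\le R$ by a separate termwise series estimate (the two shaded domains of Figure \ref{fig-multiML-est}), with validity of the representation for large $|z_1|$ secured by analytic continuation in $z_1$. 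Your angle-separation argument (all terms of $S(\ze)$ have arguments of modulus at most $\al_1\theta<\pi/2$, hence $|S(\ze)|\ge\cos(\al_1\theta)|\ze|^{\al_1}$ and $\arg S(\ze)$ stays at least $\de_0=\mu-\al_1\theta$ away from $\arg z_1$) pays the price of the extra aperture constraint $\al_1\theta<\pi/2$, but buys the uniform bound $|P(\ze)|\ge\kappa(|\ze|^{\al_1}+|z_1|)$ for all admissible $z_1,\ldots,z_m$ and all $\ze$ in the sector. This removes the paper's case split and the analytic-continuation step in one stroke, and incidentally yields a constant $C$ independent of $K$, slightly sharper than the stated lemma.

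One justification is careless as written and should be repaired. You deform ``the large Hankel contour onto $\cC$'' asserting that no singularity is crossed, but your zero-freeness of $P$ is established only in the sector $|\arg\ze|\le\theta$, and $P$ does in general have zeros outside it on the main sheet: taking $z_2,\ldots,z_m$ of small modulus, $P(\ze)\approx\ze^{\al_1}-z_1$ vanishes near $\ze=z_1^{1/\al_1}$, whose argument $|\arg z_1|/\al_1\ge\mu/\al_1>\theta$ lies in $(\theta,\pi]$ whenever $\mu\le|\arg z_1|\le\al_1\pi$. So if your initial contour is the standard Hankel path hugging the negative real axis, sweeping its rays down to aperture $\theta$ can cross zeros of $P$ and pick up residues. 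The repair is immediate and is what the paper in effect does: invoke the reciprocal-Gamma formula directly on the aperture-$\theta$ contour (it is valid for every aperture in $(\pi/2,\pi]$), with circular radius $R$ chosen large depending on $|z_1|$ so that the geometric series collapses, and then shrink the radius from $R$ to $1$ entirely inside the sector $|\arg\ze|\le\theta$, where your lower bound does guarantee analyticity. With this rewording your argument is complete.
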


\begin{proof}
Let $\al_j,z_j$ ($j=1,\ldots,m$) and $\be$ be assumed as above and introduce
the notation
\[
E_{\bm\al',\be}(z_1,\ldots,z_m):=E_{(\al_1,\al_1-\al_2,\ldots,\al_1-\al_m),\be}(z_1,\ldots,z_m).
\]
In the sequel, we denote by $C$ a general positive constant depending at most
on $\mu$, $K$, $\al_j$ ($j=1,\ldots,m$) and $\be$. First we rewrite the
multinomial Mittag-Leffler function \eqref{eq-def-multiML} in an alternative
form with the aid of the contour integral representation of $1/\Ga(z)$ (see
\cite[\S1.1.6]{P99}) that
\[
\f1{\Ga(z)}=\f1{2\al_1\pi\,\ri}\int_{\ga(R,\te)}\exp(\ze^{1/\al_1})\ze^{(1-z-\al_1)/\al_1}\,\rd\ze,
\]
where $R>0$ is a constant to be determined later and $\al_1\pi/2<\te<\mu$. Here
$\ga(R,\te)$ denotes the contour
\[
\ga(R,\te):=\{\ze\in\BC;\thinspace|\ze|=R,\
|\arg(\ze)|\le\te\}\cup\{\ze\in\BC;\thinspace|\ze|>R,\ |\arg(\ze)|=\pm\te\}.
\]
Then it follows from the multinomial formula that
\begin{align*}
& \quad\;E_{\bm\al',\be}(z_1,\ldots,z_m)\\
&
=\f1{2\al_1\pi\,\ri}\sum_{k=0}^\infty\sum_{k_1+\cdots+k_m=k}(k;k_1,\ldots,k_m)\prod_{j=1}^mz_j^{k_j}\\
& \qquad\qquad\qquad\qquad\qquad\;\;\;\times\left\{\int_{\ga(R,\te)}\exp(\ze^{1/\al_1})\ze^{(1-\be-\al_1(k+1)-\al_2k_2-\cdots-\al_mk_m)/\al_1}\,\rd\ze\right\}\\
&
=\f1{2\al_1\pi\,\ri}\int_{\ga(R,\te)}\exp(\ze^{1/\al_1})\ze^{(1-\be)/\al_1-1}\\
& \qquad\qquad\qquad\quad\;\times\sum_{k=0}^\infty\left\{\sum_{k_1+\cdots+k_m=k}(k;k_1,\ldots,k_m)\left(\f{z_1}\ze\right)^{k_1}\prod_{j=2}^m\left(\f{z_j}{\ze^{1-\al_j/\al_1}}\right)^{k_j}\right\}\rd\ze\\
& =\f1{2\al_1\pi\,\ri}\int_{\ga(R,\te)}\exp(\ze^{1/\al_1})\ze^{(1-\be)/\al_1-1}\sum_{k=0}^\infty\left(\f{z_1}\ze+\sum_{j=2}^m\f{z_k}{\ze^{1-\al_j/\al_1}}\right)^k\,\rd\ze.
\end{align*}
In order to guarantee the convergence of the summation with respect to $k$, it
is required that
\[
\left|\f{z_1}\ze+\sum_{j=2}^m\f{z_j}{\ze^{1-\al_j/\al_1}}\right|<1,\quad\forall\,\ze\in\ga(R,\te).
\]
Since $|z_j|\le K$ for $j=2,\ldots,m$, the above inequality is achieved by
taking $R$ such that
\[
R>|z_1|+K\sum_{j=2}^mR^{\al_j/\al_1}.
\]
Moreover, if we restrict, for example, $|z_1|\le K$, then $R$ can be fixed as a
constant depending only on $K$ and $\al_j$ ($j=1,\ldots,m$). Now we deduce for
$|z_j|\le K$ ($j=1,\ldots,m$) that
\begin{equation}\label{eq-MML-contour}
E_{\bm\al',\be}(z_1,\ldots,z_m)=\f1{2\al_1\pi\,\ri}\int_{\ga(R,\te)}\f{\exp(\ze^{1/\al_1})\ze^{(1-\be)/\al_1}}{\ze-z_1-\sum_{j=2}^mz_j\ze^{\al_j/\al_1}}\,\rd\ze.
\end{equation}

Next we fix $z_2,\ldots,z_m$ as negative parameters and regard both sides of
\eqref{eq-MML-contour} as functions of the single complex variable $z_1$, which
allows the application of the principle of analytic continuation to extend
equality \eqref{eq-MML-contour} to a domain including
$\{z_1\in\BC;\thinspace\mu\le|\arg(z_1)|\le\pi\}$ (see Figure
\ref{fig-multiML-est}).
\begin{figure}[htbp]\centering
\input{fig1.tex}\\
\caption{Settings of Lemma \ref{lem-multiML-est} and the contour $\ga(R,\te)$.
If $z_1$ is located in the shaded domain $A$, we employ the principle of
analytic continuation and the contour integral representation
\eqref{eq-MML-contour}. When $z_1$ is in the shaded domain $B$, it suffices to
argue by definition \eqref{eq-def-multiML}.}\label{fig-multiML-est}
\end{figure}

For $|z_1|>R$, we investigate the denominator of the integrand in
\eqref{eq-MML-contour}. Since $z_j<0$ and $\al_j<\al_1$ for $j=2,\ldots,m$, it
turns out that the curve $\ze-\sum_{j=2}^mz_j\ze^{\al_j/\al_1}$
($\ze\in\ga(R,\te)$) locates on the right-hand side of $\ga(R,\te)$; that is,
$\ga(R,\te)$ is shifted by the term $-\sum_{j=2}^mz_j\ze^{\al_j/\al_1}$ to the
positive direction. This observation immediately implies
\[
\min_{\ze\in\ga(R,\te)}\left|\ze-z_1-\sum_{j=2}^mz_j\ze^{\al_j/\al_1}\right|\ge\min_{\ze\in\ga(R,\te)}|\ze-z_1|\ge|z_1|\sin(\mu-\te).
\]
Therefore, we come up with the estimate
\begin{align*}
|E_{\bm\al',\be}(z_1,\ldots,z_m)| & =\f1{2\al_1\pi}\left|\int_{\ga(R,\te)}\f{\exp(\ze^{1/\al_1})\ze^{(1-\be)/\al_1}}{\ze-z_1-\sum_{j=2}^mz_j\ze^{\al_j/\al_1}}\,\rd\ze\right|\\
&
\le\left(\f1{2\al_1\pi\sin(\mu-\te)}\int_{\ga(R,\te)}|\exp(\ze^{1/\al_1})||\ze^{(1-\be)/\al_1}|\,\rd\ze\right)\f1{|z_1|}.
\end{align*}
The integral along $\ga(R,\te)$ converges, because for $\ze$ such that
$\arg(\ze)=\pm\te$ and $|\ze|>R$, there holds
\[
|\exp(\ze^{1/\al_1})|=\exp(|\ze|^{1/\al_1}\cos(\te/\al_1))\quad\mbox{with
}\cos(\te/\al_1)<0,
\]
while the integral on the arc $\{\ze\in\BC;\thinspace|\ze|=R,\
|\arg(\ze)|\le\te\}$ is a constant. Consequently
\begin{equation}\label{eq-MML-est-L}
|E_{\bm\al',\be}(z_1,\ldots,z_m)|\le\f C{|z_1|},\quad\mu\le|\arg(z_1)|\le\pi,\
|z_1|>R.
\end{equation}

For $\mu\le|\arg(z_1)|\le\pi$ such that $|z_1|\le R$, it is directly verified
that
\begin{align*}
|E_{\bm\al',\be}(z_1,\ldots,z_m)| & =\left|\sum_{k=0}^\infty\sum_{k_1+\cdots+k_m=k}\f{(k;k_1,\ldots,k_m)\prod_{j=1}^mz_j^{k_j}}{\Ga(\be+\al_1k-\sum_{j=2}^m\al_jk_j)}\right|\\
&
\le\sum_{k=0}^\infty\sum_{k_1+\cdots+k_m=k}\f{(k;k_1,\ldots,k_m)\prod_{j=1}^m|z_j|^{k_j}}{\Ga(\be+\al_1k-\sum_{j=2}^m\al_jk_j)}\\
& \le
C\sum_{k=0}^\infty\sum_{k_1+\cdots+k_m=k}\f{(k;k_1,\ldots,k_m)\prod_{j=1}^m|z_j|^{k_j}}{\Ga(\be+(\al_1-\al_2)k)}\\
&
=C\sum_{k=0}^\infty\f1{\Ga(\be+(\al_1-\al_2)k)}\left(\sum_{j=1}^m|z_j|\right)^k\le
C\sum_{k=0}^\infty\f{(R+(m-1)K)^k}{\Ga(\be+(\al_1-\al_2)k)}\le C,
\end{align*}
which, together with \eqref{eq-MML-est-L}, finishes the proof.
\end{proof}

For later use, we adopt the abbreviation
\begin{equation}\label{eq-abb-multiML}
E_{\bm\al',\be}^{(n)}(t):=E_{(\al_1,\al_1-\al_2,\ldots,\al_1-\al_m),\be}(-\la_nt^{\al_1},-q_2t^{\al_1-\al_2},\ldots,-q_mt^{\al_1-\al_m}),\quad
t>0,
\end{equation}
where $\la_n$ is the $n$-th eigenvalue of $-L$, $0<\be<2$, and $\al_j$, $q_j$
are those positive constants in \eqref{eq-gov-u}. Especially, regarding the
derivative of $t^{\al_1}E_{\bm\al',1+\al_1}^{(n)}(t)$ with respect to $t>0$, we
state the following technical lemma.

\begin{lem}\label{lem-multiML-diff}
Let $1>\al_1>\cdots>\al_m>1$. Then
\[
\f\rd{\rd
t}\left\{t^{\al_1}E_{\bm\al',1+\al_1}^{(n)}(t)\right\}=t^{\al_1-1}E_{\bm\al',\al_1}^{(n)}(t),\quad
t>0.
\]
\end{lem}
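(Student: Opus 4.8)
The plan is to prove the identity directly from the power-series definition \eqref{eq-def-multiML}, combined with the functional equation $\Ga(1+w)=w\,\Ga(w)$. First I would substitute the arguments prescribed in \eqref{eq-abb-multiML} into \eqref{eq-def-multiML} and collect all the powers of $t$. Writing $\si(\bm{k}):=\al_1k_1+\sum_{j=2}^m(\al_1-\al_j)k_j$ for a multi-index $\bm{k}=(k_1,\ldots,k_m)$ with $k=\sum_{j}k_j$, and noting that $(t^{\al_1})^{k_1}\prod_{j=2}^m(t^{\al_1-\al_j})^{k_j}=t^{\si(\bm{k})}$, one obtains
\[
t^{\al_1}E_{\bm\al',1+\al_1}^{(n)}(t)=\sum_{k=0}^\infty\sum_{k_1+\cdots+k_m=k}\f{(k;k_1,\ldots,k_m)\,(-\la_n)^{k_1}\prod_{j=2}^m(-q_j)^{k_j}}{\Ga(1+\al_1+\si(\bm{k}))}\,t^{\al_1+\si(\bm{k})},
\]
where the first Gamma-parameter is $\be_0=1+\al_1$.

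Next I would differentiate this series term by term. Each monomial contributes $\f\rd{\rd t}t^{\al_1+\si(\bm{k})}=(\al_1+\si(\bm{k}))\,t^{\al_1+\si(\bm{k})-1}$, and the key algebraic step is to absorb the factor $\al_1+\si(\bm{k})$ into the denominator via $\Ga(1+\al_1+\si(\bm{k}))=(\al_1+\si(\bm{k}))\,\Ga(\al_1+\si(\bm{k}))$. This lowers the first Gamma-parameter from $1+\al_1$ to $\al_1$. Factoring $t^{\al_1-1}$ out front and reabsorbing the remaining $t^{\si(\bm{k})}$ into the arguments $-\la_nt^{\al_1}$ and $-q_jt^{\al_1-\al_j}$ reassembles exactly the series defining $t^{\al_1-1}E_{\bm\al',\al_1}^{(n)}(t)$, which is the claimed right-hand side.

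The main obstacle is justifying the term-by-term differentiation, and I would handle it by a local uniform convergence argument. Fix a compact interval $[t_0,t_1]\subset(0,\infty)$. On it the quantities $|{-\la_nt^{\al_1}}|$ and $|{-q_jt^{\al_1-\al_j}}|$ remain bounded, and since each coefficient in $\si(\bm{k})$ is at least $\al_1-\al_2$ we have $\si(\bm{k})\ge(\al_1-\al_2)k$. Consequently, exactly as in the proof of Lemma \ref{lem-multiML-est}, both the original series and the formally differentiated series are dominated, up to a multiplicative constant, by a convergent two-parameter Mittag-Leffler series in the variable $\sum_{j}|z_j|$. This yields absolute and uniform convergence on $[t_0,t_1]$, which licenses differentiation under the summation sign. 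As $[t_0,t_1]$ was arbitrary, the identity holds for every $t>0$, completing the proof.
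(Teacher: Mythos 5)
Your proof is correct and follows essentially the same route as the paper's: expand the series in powers of $t$, differentiate termwise, and absorb the factor $\al_1+\si(\bm{k})$ into the denominator via $\Ga(1+w)=w\,\Ga(w)$, lowering the first parameter from $1+\al_1$ to $\al_1$ before reassembling $t^{\al_1-1}E_{\bm\al',\al_1}^{(n)}(t)$. The only difference is in the justification of termwise differentiation, where you give a domination/uniform-convergence-on-compacts argument modeled on Lemma \ref{lem-multiML-est}, while the paper simply invokes the real analyticity of $t^{\al_1}E_{\bm\al',1+\al_1}^{(n)}(t)$ for $t>0$; both justifications are valid.
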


\begin{proof}
By definition, we carry out a direct differentiation and utilize the formula
$\Ga(s)=\Ga(s+1)/s$ to derive
\begin{align*}
& \quad\;\f\rd{\rd t}\left\{t^{\al_1}E_{\bm\al',1+\al_1}^{(n)}(t)\right\}\\
& =\f\rd{\rd
t}\left\{\sum_{k=0}^\infty\sum_{k_1+\cdots+k_m=k}\f{(k;k_1,\ldots,k_m)(-\la_n)^{k_1}\prod_{j=2}^m(-q_j)^{k_j}t^{\al_1(k+1)-\al_2k_2-\cdots-\al_mk_m}}{\Ga(1+\al_1(k+1)-\sum_{j=2}^m\al_jk_j)}\right\}\\
&
=\sum_{k=0}^\infty\sum_{k_1+\cdots+k_m=k}\f{(k;k_1,\ldots,k_m)(-\la_n)^{k_1}\prod_{j=2}^m(-q_j)^{k_j}t^{\al_1(k+1)-\al_2k_2-\cdots-\al_mk_m-1}}{\Ga(\al_1(k+1)-\sum_{j=2}^m\al_jk_j)}\\
&
=t^{\al_1-1}\sum_{k=0}^\infty\sum_{k_1+\cdots+k_m=k}\f{(k;k_1,\ldots,k_m)(-\la_nt^{\al_1})^{k_1}\prod_{j=2}^m(-q_jt^{\al_1-\al_j})^{k_j}}{\Ga(\al_1+\al_1k_1+\sum_{j=2}^m(\al_1-\al_j)k_j)}\\
& =t^{\al_1-1}E_{\bm\al',\al_1}^{(n)}(t).
\end{align*}
Here we use the fact that $t^{\al_1}E_{\bm\al',1+\al_1}^{(n)}(t)$ is real
analytic for $t>0$ so that termwise differentiations are available.
\end{proof}

\Subsection{Proofs of Theorems
\ref{thm-reg-v}--\ref{thm-Lip-coef}}\label{sec-stability}

Now we are ready to employ the multinomial Mittag-Leffler functions to show
results on the well-posedness. For later use we recall the eigensystem
$\{\la_n,\vp_n\}$ of the elliptic operator $-L$ and the abbreviation
$E_{\bm\al',\be}^{(n)}(t)$ ($0<\be<2$) in \eqref{eq-abb-multiML}.

First we prove Theorem \ref{thm-reg-v}, that is, the case of vanishing source
term $F$. It was shown in \cite{L11} that the explicit solution to
\eqref{eq-gov-u}--\eqref{eq-IC-u} is given by
\begin{equation}\label{eq-rep-v}
u(\,\cdot\,,t)=\sum_{n=1}^\infty\left(1-\la_nt^{\al_1}E_{\bm\al',1+\al_1}^{(n)}(t)\right)(a,\vp_n)\,\vp_n.
\end{equation}
With the aid of Lemmata \ref{lem-multiML-alter}--\ref{lem-multiML-diff}, it is
straightforward to demonstrate the well-posedness by dominating the solution by
the initial value.

\begin{proof}[Proof of Theorem \ref{thm-reg-v}]
Let $a\in\cD((-L)^\ga)$ with $0\le\ga\le1$. In the sequel, by $C$ we refer to
positive constants independent of the initial value $a$ which may vary from
line by line.

(a) First, a direct application of Lemma \ref{lem-multiML-est} yields
\[
\left|1-\la_nt^{\al_1}E_{\bm\al',1+\al_1}^{(n)}(t)\right|\le1+\la_nt^{\al_1}\f
C{1+\la_nt^{\al_1}}\le C.
\]
Thus, we take advantage of \eqref{eq-rep-v} to derive
\begin{equation}\label{eq-est-v-L2}
\|u(\,\cdot\,,t)\|_{L^2(\Om)}=\left\{\sum_{n=1}^\infty\left|1-\la_nt^{\al_1}E_{\bm\al',1+\al_1}^{(n)}(t)\right|^2|(a,\vp_n)|^2\right\}^{1/2}\le
C\|a\|_{L^2(\Om)}
\end{equation}
for $0<t\le T$, where we use the fact that $\{\vp_n\}$ forms an orthonormal
basis of $L^2(\Om)$. Since the summation in \eqref{eq-rep-v} converges in
$L^2(\Om)$ uniformly in $t\in[0,T]$, we get $u\in C([0,T];L^2(\Om))$ or
\eqref{eq-est-v1}. Furthermore, by the definition of $\cD(-L)$, we see
\[
\|u(\,\cdot\,,t)\|_{\cD(-L)}^2=\sum_{n=1}^\infty\left(\la_n\left|1-\la_nt^{\al_1}E_{\bm\al',1+\al_1}^{(n)}(t)\right|\right)^2|(a,\vp_n)|^2.
\]
In order to treat the term $1-\la_nt^{\al_1}E_{\bm\al',1+\al_1}^{(n)}(t)$, we
substitute
\[
\be_0=1,\quad\be_1=\al_1,\quad z_1=-\la_nt^{\al_1},\quad\be_j=\al_1-\al_j\mbox{
and }z_j=-q_jt^{\al_1-\al_j}\ (j=2,\ldots,m)
\]
in Lemma \ref{lem-multiML-alter} and then utilize Lemma \ref{lem-multiML-est}
to deduce
\begin{align*}
\left|1-\la_nt^{\al_1}E_{\bm\al',1+\al_1}^{(n)}(t)\right| &
=\left|E_{\bm\al',1}^{(n)}(t)+\sum_{j=2}^mq_jt^{\al_1-\al_j}E_{\bm\al',1+\al_1-\al_j}^{(n)}(t)\right|\\
&
\le\left|E_{\bm\al',1}^{(n)}(t)\right|+C\sum_{j=2}^mt^{\al_1-\al_j}\left|E_{\bm\al',1+\al_1-\al_j}^{(n)}(t)\right|\le C\sum_{j=1}^m\f{t^{\al_1-\al_j}}{1+\la_nt^{\al_1}}.
\end{align*}
Therefore, for $0<t\le T$, we estimate
\begin{align*}
\|u(\,\cdot\,,t)\|_{\cD(-L)}^2 &
=\sum_{n=1}^\infty\left|\la_n^{1-\ga}\left(1-\la_nt^{\al_1}E_{\bm\al',1+\al_1}^{(n)}(t)\right)\right|^2|\la_n^\ga(a,\vp_n)|^2\\
& \le
C^2\sum_{n=1}^\infty\left(\sum_{j=1}^m\f{\la_n^{1-\ga}t^{\al_1-\al_j}}{1+\la_nt^{\al_1}}\right)^2|\la_n^\ga(a,\vp_n)|^2\\
& \le
C^2\sum_{n=1}^\infty\left(\sum_{j=1}^m\f{(\la_nt^{\al_1})^{1-\ga}}{1+\la_nt^{\al_1}}t^{\al_1\ga-\al_j}\right)^2|\la_n^\ga(a,\vp_n)|^2\\
& \le
C^2\left(\sum_{j=1}^mt^{\al_1\ga-\al_j}\right)^2\sum_{n=1}^\infty|\la_n^\ga(a,\vp_n)|^2\le\left(C\|a\|_{\cD((-L)^\ga)}t^{\al_1(\ga-1)}\right)^2,
\end{align*}
where we use the fact
\[\f{(\la_nt^{\al_1})^{1-\ga}}{1+\la_nt^{\al_1}}\le\left\{\!\begin{alignedat}{2}
& \f1{1+\la_nt^{\al_1}} & \quad & \mbox{if }\la_nt^{\al_1}\le1\\
& \f{\la_nt^{\al_1}}{1+\la_nt^{\al_1}} & \quad & \mbox{if }\la_nt^{\al_1}\ge1
\end{alignedat}\right\}\le1\]
in the third inequality. This, together with the fact $\cD(-L)\subset
H^2(\Om)$, yield the estimate \eqref{eq-est-v2}. Furthermore, it follows
immediately from \eqref{eq-est-v2} and $\al_1<1$ that $u\in
L^{\f1{1-\ga}}(0,T;H^2(\Om)\cap H^1_0(\Om))$.

(b) In order to investigate the asymptotic behavior near $t=0$, first we have
\[
\|u(\,\cdot\,,t)-a\|_{\cD((-L)^\ga)}^2=\sum_{n=1}^\infty\left|\la_nt^{\al_1}E_{\bm\al',1+\al_1}^{(n)}(t)\right|^2|\la_n^\ga(a,\vp_n)|^2\le\left(C\|a\|_{\cD((-L)^\ga)}\right)^2<\infty
\]
for $0\le t\le T$ by a direct calculation and Lemma \ref{lem-multiML-est}. On
the other hand, in view of Lemma \ref{lem-multiML-alter}, the term
$\la_nt^{\al_1}E_{\bm\al',1+\al_1}^{(n)}(t)$ can be rewritten as
\[
\la_nt^{\al_1}E_{\bm\al',1+\al_1}^{(n)}(t)=-(E_{\bm\al',1}^{(n)}(t)-1)-\sum_{j=2}^mq_jt^{\al_1-\al_j}E_{\bm\al',1+\al_1-\al_j}^{(n)}(t).
\]
Thanks to the fact that $\lim_{t\to0}(E_{\bm\al',1}^{(n)}(t)-1)=0$ and the
boundedness of $E_{\bm\al',1+\al_1-\al_j}^{(n)}$ ($j=2,\ldots,m$) by Lemma
\ref{lem-multiML-est} for each $n=1,2,\ldots$, the above observation implies
\[
\lim_{t\to0}(\la_nt^{\al_1}E_{\bm\al',1+\al_1}^{(n)}(t))=0,\quad\forall\,n=1,2,\ldots.
\]
Therefore, \eqref{eq-asymp-v-init} follows immediately from Lebesgue's
dominated convergence theorem.

(c) In order to deal with $\pa_tu$, we make use of Lemma \ref{lem-multiML-diff}
to obtain
\[
\pa_tu(\,\cdot\,,t)=-t^{\al_1-1}\sum_{n=1}^\infty\la_nE_{\bm\al',\al_1}^{(n)}(t)(a,\vp_n)\vp_n.
\]
Then a similar argument to that for \eqref{eq-est-v2} indicates
\begin{align*}
\|\pa_tu(\,\cdot\,,t)\|_{L^2(\Om)}^2 &
=t^{2(\al_1-1)}\sum_{n=1}^\infty\left|\la_n^{1-\ga}E_{\bm\al',\al_1}^{(n)}(t)\right|^2|\la_n^\ga(a,\vp_n)|^2\\
& \le
C^2t^{2(\al_1-1)}\sum_{n=1}^\infty\left(\f{(\la_nt^{\al_1})^{1-\ga}}{1+\la_nt^{\al_1}}t^{\al_1(\ga-1)}\right)^2|\la_n^\ga(a,\vp_n)|^2\\
& \le\left(C\|a\|_{\cD((-L)^\ga)}t^{\al_1\ga-1}\right)^2,\quad0<t\le T
\end{align*}
or \eqref{eq-est-v_t}. This implies $\pa_tu\in C((0,T];L^2(\Om))$ immediately.

(d) Finally, to give estimates for $\pa_t^\be u$ with $0<\be<1$ when $\ga>0$,
we employ \eqref{eq-est-v_t} and turn to the definition of the Caputo
derivative to obtain
\begin{align*}
\|\pa_t^\be u(\,\cdot\,,t)\|_{L^2(\Om)} &
=\f1{\Ga(1-\be)}\left\|\int_0^t\f{\pa_su(\,\cdot\,,s)}{(t-s)^\be}\,\rd s\right\|_{L^2(\Om)}\le
C\int_0^t\f{\|\pa_su(\,\cdot\,,s)\|_{L^2(\Om)}}{(t-s)^\be}\,\rd s\\
& \le C\|a\|_{\cD((-L)^\ga)}\int_0^ts^{\al_1\ga-1}(t-s)^{-\be}\,\rd s\le
C\|a\|_{\cD((-L)^\ga)}t^{\al_1\ga-\be},\quad0<t\le T
\end{align*}
or \eqref{eq-est-v_t^be}, where the first inequality follows from Minkowski's
inequality for integrals. Especially, as long as $\be\le\al_1$, there holds
$\al_1\ga-\be>\ga-1$ and obviously $\pa_t^\be v\in
L^{\f1{1-\ga}}(0,T;L^2(\Om))$.

Collecting all the results above, we complete the proof of Theorem
\ref{thm-reg-v}.
\end{proof}

Next we turn to the proof of Theorem \ref{thm-reg-w}, that is, the case of
vanishing initial value $a$. To construct an explicit solution, we apply the
eigenfunction expansion method. In other words, we seek for a solution to
\eqref{eq-gov-u}--\eqref{eq-IC-u} of the particular form
\begin{equation}\label{eq-const}
u(\,\cdot\,,t)=\sum_{n=1}^\infty T_n(t)\vp_n,\quad0<t\le T,
\end{equation}
where $\vp_n$ is the $n$-th eigenfunction of $-L$. The substitution of
\eqref{eq-const} into \eqref{eq-gov-u} yields
\[
\sum_{n=1}^\infty\left(\sum_{j=1}^mq_j\pa_t^{\al_j}T_n(t)\right)\vp_n=-\sum_{n=1}^\infty\la_nT_n(t)\vp_n+\sum_{n=1}^\infty(F(\,\cdot\,,t),\vp_n)\vp_n.
\]
Therefore, it is readily seen from the orthogonality of $\{\vp_n\}$ and the
homogeneous initial condition \eqref{eq-IC-u} that $T_n$ satisfies an initial
value problem for an ordinary differential equation
\[
\begin{cases}
\!\begin{alignedat}{2}
& \sum_{j=1}^mq_j\pa_t^{\al_j}T_n(t)+\la_nT_n(t)=(F(\,\cdot\,,t),\vp_n), &
\quad & 0<t\le T,\\
& T_n(0)=0.
\end{alignedat}
\end{cases}
\]
Then it follows from \cite[Theorem 4.1]{LG99} that
\[
T_n(t)=\int_0^ts^{\al_1-1}E_{\bm\al',\al_1}^{(n)}(s)(F(\,\cdot\,,t-s),\vp_n)\,\rd
s,
\]
implying that the solution takes the form of a convolution
\begin{equation}\label{eq-rep-w}
u(\,\cdot\,,t)=\int_0^tU(s)F(\,\cdot\,,t-s)\,\rd s,
\end{equation}
where
\begin{equation}\label{eq-def-W}
U(t)f:=t^{\al_1-1}\sum_{n=1}^\infty E_{\bm\al',\al_1}^{(n)}(t)(f,\vp_n)\vp_n.
\end{equation}
Before proceeding to the proof, we introduce a key lemma for showing Theorem
\ref{thm-reg-w}(a).

\begin{lem}\label{lem-mono}
{\rm(see \cite[Theorem 3.2]{B13})} The function
$t^{\al_1-1}E_{\bm\al',\al_1}^{(n)}(t)$ is positive for $t>0$.
\end{lem}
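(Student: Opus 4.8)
The plan is to pass to the Laplace transform and recognize the claimed positivity as a manifestation of complete monotonicity. Writing $s_n(t):=t^{\al_1-1}E_{\bm\al',\al_1}^{(n)}(t)$, I first observe from the construction preceding the lemma that $s_n$ is precisely the temporal fundamental solution of the scalar fractional equation $\sum_{j=1}^mq_j\pa_t^{\al_j}T+\la_nT=g$ with $T(0)=0$, i.e.\ $T_n=s_n*g$. Taking the Laplace transform and using that $\pa_t^{\al_j}$ turns into multiplication by $p^{\al_j}$ for data vanishing at $t=0$, I obtain the clean expression
\[
\widehat{s_n}(p)=\f1{\sum_{j=1}^mq_jp^{\al_j}+\la_n},\qquad\re p>0.
\]
The same identity follows directly from \cite[Theorem 4.1]{LG99}, or by a termwise Laplace transform of the series \eqref{eq-def-multiML} using $\int_0^\infty e^{-pt}t^{\be-1}\,\rd t=\Ga(\be)p^{-\be}$.

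Second, I would show that the denominator $g(p):=\la_n+\sum_{j=1}^mq_jp^{\al_j}$ is a \emph{complete Bernstein function}. The elementary building block is that $p\mapsto p^{\al_j}$ is a complete Bernstein function for each $\al_j\in(0,1)$: it extends analytically to $\BC\setminus(-\infty,0]$, is nonnegative on $(0,\infty)$, and maps the upper half-plane into itself, since its argument equals $\al_j\arg p\in(0,\pi)$ whenever $\arg p\in(0,\pi)$. As the cone of complete Bernstein functions is closed under nonnegative linear combinations and under the addition of a nonnegative constant, $g$ is itself a complete Bernstein function, and it never vanishes on $(0,\infty)$ because $q_j,\la_n>0$.

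Third, I invoke the standard fact that the reciprocal of a nonzero complete Bernstein function is a Stieltjes function, so $\widehat{s_n}=1/g$ admits the representation $\widehat{s_n}(p)=\int_0^\infty\f{\rd\si(\tau)}{p+\tau}$ with $\si\ge0$; here the usual polynomial and constant parts drop out, because $\widehat{s_n}(p)\to0$ as $p\to\infty$ and $p\,\widehat{s_n}(p)\to0$ as $p\to0$ (using $\al_j<1$ and $\la_n>0$). Writing $\f1{p+\tau}=\int_0^\infty e^{-pt}e^{-\tau t}\,\rd t$ and interchanging the order of integration yields
\[
\widehat{s_n}(p)=\int_0^\infty e^{-pt}\left(\int_0^\infty e^{-\tau t}\,\rd\si(\tau)\right)\rd t,
\]
so by the injectivity of the Laplace transform $s_n(t)=\int_0^\infty e^{-\tau t}\,\rd\si(\tau)$ is completely monotone, in particular nonnegative. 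Strict positivity then follows since $s_n$ is real analytic on $(0,\infty)$ and not identically zero (otherwise $\widehat{s_n}\equiv0$), or simply because $\si\neq0$ forces the integral to be strictly positive.

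The main obstacle is not a single computation but the assembly of the correct abstract machinery: one must identify $\widehat{s_n}$ correctly, establish the complete Bernstein property of $g$ (the one genuinely structural step, which hinges on all orders lying in $(0,1)$ and all coefficients being positive), and then carefully justify the reciprocal-of-a-complete-Bernstein-function step together with the uniqueness/injectivity that transfers complete monotonicity of $\widehat{s_n}$ back into positivity of $s_n$ in the time variable. Carrying these facts out from scratch would reproduce the relevant portions of Bazhlekova \cite{B13}; alternatively, one may simply quote them.
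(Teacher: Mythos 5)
Your proposal is correct and follows essentially the same route as the paper, which offers no proof of its own but simply cites Bazhlekova \cite[Theorem 3.2]{B13}; the cited proof is precisely your chain — identify the Laplace transform as $1/(\sum_{j=1}^mq_jp^{\al_j}+\la_n)$, note the denominator is a complete Bernstein function, so its reciprocal is a Stieltjes function, hence $t^{\al_1-1}E_{\bm\al',\al_1}^{(n)}(t)$ is completely monotone and in particular positive. In short, you have reconstructed the argument that the paper outsources to the reference, so there is nothing to correct.
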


\begin{proof}[Proof of Theorem \ref{thm-reg-w}]
Let $F\in L^p(0,T;\cD((-L)^\ga))$ with $1\le p\le\infty$ and $0\le\ga\le1$. In
the sequel, by $C$ we refer to a general positive constant independent of $F$
and $\tau$.

(a) Let $p=2$. According to the expression \eqref{eq-rep-w}--\eqref{eq-def-W},
formally we write
\[\|u(\,\cdot\,,t)\|_{\cD(-L)}^2=\sum_{n=1}^\infty\la_n^2\left(\int_0^ts^{\al_1-1}E_{\bm\al',\al_1}^{(n)}(s)(F(\,\cdot\,,t-s),\vp_n)\,\rd
s\right)^2.\]
Using Young's inequality for convolutions, we estimate
\begin{align*}
\|u\|_{L^2(0,T;\cD(-L))}^2 &
=\sum_{n=1}^\infty\la_n^2\left\|\int_0^ts^{\al_1-1}E_{\bm\al',\al_1}^{(n)}(s)(F(\,\cdot\,,t-s),\vp_n)\,\rd
s\right\|_{L^2(0,T)}^2\\
&
\le\sum_{n=1}^\infty\left(\la_n\int_0^Tt^{\al_1-1}|E_{\bm\al',\al_1}^{(n)}(t)|\,\rd
t\right)^2\|(F(\,\cdot\,,t),\vp_n)\|_{L^2(0,T)}^2.
\end{align*}
By Lemma \ref{lem-mono}, we can remove the absolute value of $E_{\bm\al',\al_1}^{(n)}(t)$ and apply Lemma \ref{lem-multiML-diff} to derive
\[
\int_0^Tt^{\al_1-1}|E_{\bm\al',\al_1}^{(n)}(t)|\,\rd
t=\int_0^Tt^{\al_1-1}E_{\bm\al',\al_1}^{(n)}(t)\,\rd
t=T^{\al_1}E_{\bm\al',1+\al_1}^{(n)}(T).
\]
Consequently, we use Lemma \ref{lem-multiML-est} to conclude
\begin{align*}
\|u\|_{L^2(0,T;H^2(\Om))}^2 & \le C^2\|u\|_{L^2(0,T;\cD(-L))}^2\le
C^2\sum_{n=1}^\infty\left(\la_nT^{\al_1}E_{\bm\al',1+\al_1}^{(n)}(T)\right)^2\|(F(\,\cdot\,,t),\vp_n)\|_{L^2(0,T)}^2\\
& \le
C^2\sum_{n=1}^\infty\|(F(\,\cdot\,,t),\vp_n)\|_{L^2(0,T)}^2=\left(C\|F\|_{L^2(\Om\times(0,T))}\right)^2.
\end{align*}

(b) Fix $\tau\in(0,1]$ arbitrarily. First we give an estimate for
\eqref{eq-def-W} with $f\in\cD((-L)^\ga)$. Similarly to the proof of Theorem
\ref{thm-reg-v}, we apply Lemma \ref{lem-multiML-est} to deduce
\begin{align*}
\|U(t)f\|_{\cD((-L)^{\ga+1-\tau})}^2 &
=t^{2(\al_1-1)}\sum_{n=1}^\infty\left|\la_n^{1-\tau}E_{\bm\al',\al_1}^{(n)}\right|^2|\la_n^\ga(f,\vp_n)|^2\\
& \le
C^2t^{2(\al_1-1)}\sum_{n=1}^\infty\left(\f{(\la_nt^{\al_1})^{1-\tau}}{1+\la_nt^{\al_1}}t^{\al_1(\tau-1)}\right)^2|\la_n^\ga(f,\vp_n)|^2\\
& \le\left(C\|f\|_{\cD((-L)^\ga)}t^{\al_1\tau-1}\right)^2,\quad0<t\le T.
\end{align*}
Using \eqref{eq-rep-w} and Minkowski's inequality for integrals, formally we
have
\begin{align}
\|u(\,\cdot\,,t)\|_{\cD((-L)^{\ga+1-\tau})} &
=\left\|\int_0^tU(s)F(\,\cdot\,,t-s)\,\rd s\right\|_{\cD((-L)^{\ga+1-\tau})}\nonumber\\
& \le\int_0^t\|U(s)F(\,\cdot\,,t-s)\|_{\cD((-L)^{\ga+1-\tau})}\,\rd
s\nonumber\\
& \le C\int_0^t\|F(\,\cdot\,,t-s)\|_{\cD((-L)^\ga)}s^{\al_1\tau-1}\,\rd
s,\quad0<t\le T.\label{eq-est-U}
\end{align}
Finally, it follows from Young's inequality for convolutions that
\begin{align*}
\|u\|_{L^p(0,T;\cD((-L)^{\ga+1-\tau}))} & \le
C\left\|\int_0^t\|F(\,\cdot\,,t-s)\|_{\cD((-L)^\ga)}s^{\al_1\tau-1}\,\rd s\right\|_{L^p(0,T)}\\
& \le C\|F\|_{L^p(0,T;\cD((-L)^\ga))}\int_0^Tt^{\al_1\tau-1}\,\rd t\le\f
C\tau\|F\|_{L^p(0,T;\cD((-L)^\ga))}.
\end{align*}
This completes the verification of \eqref{eq-est-w-Lp}.

(c) Assume $\al_1p>1$ and fix $\tau\in(\f1{\al_1p},1]$ arbitrarily. To
investigate the asymptotic behavior near $t=0$, we apply H\"older's inequality
to \eqref{eq-est-U} to see
\[
\|u(\,\cdot\,,t)\|_{\cD((-L)^{\ga+1-\tau})}\le
C\|F\|_{L^p(0,t;\cD((-L)^\ga))}\left(\int_0^ts^{(\al_1\tau-1)p'}\,\rd s\right)^{1/p'},
\]
where $p'$ is the conjugate number of $p$, i.e. $1/p+1/p'=1$. Since
$\tau>\f1{\al_1p}$, we see $(\al_1\tau-1)p'>-1$ and then
$\lim_{t\to0}\int_0^ts^{(\al_1\tau-1)p'}\,\rd s=0$, indicating
\eqref{eq-asymp-w-init} immediately.
\end{proof}

As a direct application of Theorems \ref{thm-reg-v}--\ref{thm-reg-w}, it is
straightforward to show the Lipschitz stability of the solution with respect to
various coefficients.

\begin{proof}[Proof of Theorem \ref{thm-Lip-coef}]
Let $\ga,\tau\in(0,1]$, $a\in\cD((-L)^\ga)$ and $C>0$ be a general constant
which depends only on $a$, $\cA$, $\cQ$ and $\cU$. First, a direct application
of Theorem \ref{thm-reg-v} immediately yields $u\in L^p(0,T;H^2(\Om)\cap
H^1_0(\Om))$ and $\pa_t^\be u\in L^p(0,T;L^2(\Om))$ for $0<\be\le\al_1$, where
we abbreviate $p:=\f1{1-\ga}$. More precisely, there exists $C>0$ such that
\begin{equation}\label{eq-est-u1}
\|u\|_{L^p(0,T;H^2(\Om))}\le C,\quad\|\pa_t^\be u\|_{L^p(0,T;L^2(\Om))}\le C\
(0<\be\le\al_1).
\end{equation}
On the other hand, by taking the difference of systems \eqref{eq-gov-u2} and
\eqref{eq-gov-u1}, it turns out that the system for $v:=\wt u-u$ reads
\[
\begin{cases}
\!\begin{alignedat}{2}
& \sum_{j=1}^m\wt q_j\pa_t^{\wt\al_j}v=L_{\wt D}v+F & \quad & \mbox{in
}\Om\times(0,T],\\
& v=0 & \quad & \mbox{on }\pa\Om\times(0,T],\\
& v|_{t=0}=0 & \quad & \mbox{in }\Om,
\end{alignedat}
\end{cases}
\]
where
\[
F:=\sum_{j=1}^m\wt q_j(\pa_t^{\al_j}u-\pa_t^{\wt\al_j}u)+\sum_{j=2}^m(q_j-\wt
q_j)\pa_t^{\al_j}u+L_{\wt D-D}u.
\]
Without loss of generality, we assume $\al_1\ge\wt\al_1$, or otherwise we
investigate $v:=u-\wt u$ instead. Therefore, together with $D,\wt D\in
C^1(\ov\Om)$, we see $F\in L^p(0,T;L^2(\Om))$ from \eqref{eq-est-u1}. Now it is
straightforward to employ Theorem \ref{thm-reg-w}(b) to obtain
\begin{equation}\label{eq-est-u-F}
\|u-\wt
u\|_{L^p(0,T;\cD((-L)^{1-\tau}))}=\|v\|_{L^p(0,T;\cD((-L)^{1-\tau}))}\le\f
C\tau\|F\|_{L^p(0,T;L^2(\Om))}.
\end{equation}
Especially, if $\ga\ge\f12$, we see $p=\f1{1-\ga}\ge2$ and hence
$L^p(\Om\times(0,T))\subset L^2(0,T;L^2(\Om))$. It then follows from Theorem
\ref{thm-reg-w}(a) that
\begin{equation}\label{eq-est-u-Fsp}
\|u-\wt u\|_{L^2(0,T;H^2(\Om))}\le C\|F\|_{L^2(\Om\times(0,T))}\le
C\|F\|_{L^p(0,T;L^2(\Om))}.
\end{equation}
Therefore, it suffices to dominate $\|F\|_{L^p(0,T;L^2(\Om))}$ by the
difference of coefficients.

To this end, first it is readily seen from \eqref{eq-est-u1} that
\begin{align*}
\|F\|_{L^p(0,T;L^2(\Om))} & \le\sum_{j=1}^m\wt
q_j\|\pa_t^{\al_j}u-\pa_t^{\wt\al_j}u\|_{L^p(0,T;L^2(\Om))}+\sum_{j=2}^m|q_j-\wt
q_j|\|\pa_t^{\al_j}u\|_{L^p(0,T;L^2(\Om))}\\
& \quad\,+C\|D-\wt D\|_{C^1(\ov\Om)}\|u\|_{L^p(0,T;H^2(\Om))}\\
& \le
C\left(\sum_{j=1}^m\|\pa_t^{\al_j}u-\pa_t^{\wt\al_j}u\|_{L^p(0,T;L^2(\Om))}+\sum_{j=2}^m|q_j-\wt
q_j|+\|D-\wt D\|_{C^1(\ov\Om)}\right).
\end{align*}
To give an estimate for $\pa_t^{\al_j}u-\pa_t^{\wt\al_j}u$ by
$|\al_j-\wt\al_j|$, we adopt a similar treatment as that in
\cite[Proposition 1]{LZJY13} and decompose it by definition as
\begin{align*}
\pa_t^{\al_j}u(\,\cdot\,,t)-\pa_t^{\wt\al_j}u(\,\cdot\,,t) &
=\f1{\Ga(1-\al_j)}\int_0^t\f{\pa_su(\,\cdot\,,s)}{(t-s)^{\al_j}}\,\rd
s-\f1{\Ga(1-\wt\al_j)}\int_0^t\f{\pa_su(\,\cdot\,,s)}{(t-s)^{\wt\al_j}}\,\rd
s\\
& =I_j^1(\,\cdot\,,t)+I_j^2(\,\cdot\,,t),
\end{align*}
where
\begin{align*}
I_j^1(\,\cdot\,,t) &
:=\f{\Ga(1-\wt\al_j)-\Ga(1-\al_j)}{\Ga(1-\wt\al_j)}\pa_t^{\al_j}u(\,\cdot\,,t),\\
I_j^2(\,\cdot\,,t) &
:=\f1{\Ga(1-\wt\al_j)}\int_0^t\{(t-s)^{-\al_j}-(t-s)^{-\wt\al_j}\}\pa_su(\,\cdot\,,s)\,\rd
s.
\end{align*}
Since $\al_j,\wt\al_j\in[\underline\al,\ov\al]$ and the Gamma function is
Lipschitz continuous in $[1-\ov\al,1-\underline\al]$, it follows from
\eqref{eq-est-u1} that
\begin{equation}\label{eq-est-I1}
\|I_j^1\|_{L^p(0,T;L^2(\Om))}=\f{|\Ga(1-\wt\al_j)-\Ga(1-\al_j)|}{\Ga(1-\wt\al_j)}\|\pa_t^{\al_j}u\|_{L^p(0,T;L^2(\Om))}\le
C|\al_j-\wt\al_j|.
\end{equation}
In order to treat $I_j^2$, we recall the estimate \eqref{eq-est-v_t} for
$\pa_tu$ and utilize Minkowski's inequality for integrals to deduce
\begin{align*}
\|I_j^2(\,\cdot\,,t)\|_{L^2(\Om)} &
=\f1{\Ga(1-\wt\al_j)}\left\|\int_0^t\{(t-s)^{-\al_j}-(t-s)^{-\wt\al_j}\}\pa_su(\,\cdot\,,s)\,\rd
s\right\|_{L^2(\Om)}\\
&
\le\int_0^t|(t-s)^{-\al_j}-(t-s)^{-\wt\al_j}|\|\pa_su(\,\cdot\,,s)\|_{L^2(\Om)}\,\rd
s\\
& \le C\int_0^t|(t-s)^{-\al_j}-(t-s)^{-\wt\al_j}|\,s^{\al_1\ga-1}\,\rd s\\
& =C\int_0^t|s^{-\al_j}-s^{-\wt\al_j}|\,(t-s)^{\al_1\ga-1}\,\rd s.
\end{align*}
Using the mean value theorem, we have
\[
|s^{-\al_j}-s^{-\wt\al_j}|=|\ln s|\,s^{-\wh\al_j(s)}|\al_j-\wt\al_j|,
\]
where $\wh\al_j(s)$ is a parameter depending on $s$ such that
\[
\min\{\al_j,\wt\al_j\}\le\wh\al_j(s)\le\max\{\al_j,\wt\al_j\}\le\al_1
\]
by the assumption $\al_1\ge\wt\al_1$. Henceforth, we assume $T>1$ without lose
of generality. We prove separately in the cases $0<t\le1$ and $1<t\le T$.
First, let $0<t\le1$. Then there holds $0<s<1$ and hence
\[
s^{-\wh\al_j(s)}\le s^{-\al_1}=s^\ve s^{-\al_1-\ve},
\]
where $\ve>0$ is sufficiently small such that $\al_1(1-\ga)+\ve<1-\ga$. Since
$|\ln s|\,s^\ve\le C$ for $0<s<1$, we obtain
\begin{align}
\|I_j^2(\,\cdot\,,t)\|_{L^2(\Om)} & \le C|\al_j-\wt\al_j|\int_0^t|\ln
s|\,s^{-\wh\al_j(s)}(t-s)^{\al_1\ga-1}\,\rd s\nonumber\\
& \le C|\al_j-\wt\al_j|\int_0^t(|\ln
s|\,s^\ve)s^{-\al_1-\ve}(t-s)^{\al_1\ga-1}\,\rd s\nonumber\\
& \le C|\al_j-\wt\al_j|\,t^{-\al_1(1-\ga)-\ve},\quad0<t\le1,\label{eq-est-I2-s}
\end{align}
where we apply the boundedness of the Beta function $B(1-\al_1-\ve,\al_1\ga)$
and $\ga>0$. Second, let $1<t\le T$. Then it is readily seen that $t-s>1-s$ for
$0<s<1$ and $|\ln s|\,s^{-\wh\al_j(s)}\le C$ for $1\le s<t$. These observation,
together with the inequality \eqref{eq-est-I2-s} for $t=1$, indicate
\begin{align}
\|I_j^2(\,\cdot\,,t)\|_{L^2(\Om)} & \le
C|\al_j-\wt\al_j|\left(\int^1_0+\int_1^t\right)|\ln
s|\,s^{-\wh\al_j(s)}(t-s)^{\al_1\ga-1}\,\rd s\nonumber\\
& \le C|\al_j-\wt\al_j|\left(\int^1_0|\ln
s|\,s^{-\wh\al_j(s)}(1-s)^{\al_1\ga-1}\,\rd s+C\int_1^t(t-s)^{\al_1\ga-1}\,\rd
s\right)\nonumber\\
& \le C|\al_j-\wt\al_j|,\quad1<t\le T.\label{eq-est-I2-l}
\end{align}
The combination of \eqref{eq-est-I2-s} and \eqref{eq-est-I2-l} immediately
yields
\[
\|I_j^2(\,\cdot\,,t)\|_{L^2(\Om)}\le
C|\al_j-\wt\al_j|\,t^{-\al_1(1-\ga)-\ve},\quad0<t\le T
\]
and thus $\|I_j^2\|_{L^p(0,T;L^2(\Om))}\le C|\al_j-\wt\al_j|$ because $\al_1(1-\ga)+\ve<1-\ga=1/p$. Consequently, collecting the estimate \eqref{eq-est-I1} for $I_j^1$, we conclude
\begin{align*}
\|F\|_{L^p(0,T;L^2(\Om))} & \le
C\left(\sum_{j=1}^m\|I_j^1+I_j^2\|_{L^p(0,T;L^2(\Om))}+\sum_{j=2}^m|q_j-\wt
q_j|+\|D-\wt D\|_{C^1(\ov\Om)}\right)\\
& \le C\left(\sum_{j=1}^m|\al_j-\wt\al_j|+\sum_{j=2}^m|q_j-\wt q_j|+\|D-\wt
D\|_{C^1(\ov\Om)}\right),
\end{align*}
implying \eqref{eq-Lip-coef} and \eqref{eq-Lip-coef-sp} with the aid of
\eqref{eq-est-u-F} and \eqref{eq-est-u-Fsp} respectively.
\end{proof}

\Subsection{Proof of Theorem \ref{thm-asymp}}\label{sec-asymp}

In this subsection, we study the long-time asymptotic behavior of the solution
$u$ to \eqref{eq-gov-u}--\eqref{eq-IC-u} with $F=0$ by a Laplace transform
argument. In the sequel, by $C$ we refer to a generic constant which is
independent of the initial value $a$ and $u$ but may depend on $d$, $\Om$,
$\al_j$, $q_j$ ($j=1,\ldots,m$) and the operator $-L$.

Although an explicit representation \eqref{eq-rep-v} is available in this case,
we write the solution in form of
\begin{equation}\label{eq-rep-u-infty}
u(\,\cdot\,,t)=\sum_{n=1}^\infty u_n(t)\vp_n,\quad t>0
\end{equation}
by use of the eigensystem $\{\la_n,\vp_n\}$ of $-L$, where a direct
calculation and the orthogonality of $\{\vp_n\}$ yield
\begin{equation}\label{eq-eigen-exp}
\begin{cases}
\!\begin{alignedat}{2}
& \sum_{j=1}^mq_j\pa_t^{\al_j}u_n(t)+\la_n u_n(t)=0, & \quad & t>0,\\
& u_n(0)=(a,\vp_n),
\end{alignedat}
\end{cases}n=1,2,\ldots.
\end{equation}
The proof of Theorem \ref{thm-asymp} relies on the following lemma.

\begin{lem}\label{lem-asymp-u_n}
Let $u_n\ (n=1,2,\ldots)$ solve the initial value problem
\eqref{eq-eigen-exp}. Then there
exists a constant $C>0$ such that
\begin{equation}\label{eq-asymp-u_n}
\left|u_n(t)-\f{q_m(a,\vp_n)}{\la_n\Ga(1-\al_m)\,t^{\al_m}}\right|\le\f{C|(a,\vp_n)|}{\la_nt^{\al_{m-1}}},\quad
t\gg1.
\end{equation}
\end{lem}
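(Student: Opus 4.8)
The plan is to take the Laplace transform in $t$, which is the natural tool for the large-$t$ behavior of the constant-coefficient fractional ODE \eqref{eq-eigen-exp}. Writing $\wh{u_n}$ for the transform and using $\mathcal{L}[\pa_t^{\al_j}u_n](s)=s^{\al_j}\wh{u_n}(s)-s^{\al_j-1}u_n(0)$ (valid since $0<\al_j<1$), I would solve the transformed equation to get
\[
\wh{u_n}(s)=\f{(a,\vp_n)\sum_{j=1}^m q_j s^{\al_j-1}}{\la_n+\sum_{j=1}^m q_j s^{\al_j}}.
\]
Because the powers $s^{\al_j}$ introduce a branch cut along $(-\infty,0]$, I would first check that the denominator never vanishes off the cut: for $s=re^{\ri\te}$ with $0<|\te|<\pi$ the imaginary part $\sum_j q_j r^{\al_j}\sin(\al_j\te)$ is nonzero since $0<\al_j\te<\pi$, while on $(0,\infty)$ the denominator exceeds $\la_n$. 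Thus $\wh{u_n}$ is holomorphic in $\BC\setminus(-\infty,0]$ and is $O(|(a,\vp_n)|/|s|)$ at infinity, which licenses deforming the Bromwich line onto a Hankel contour $\ga$ around the cut.

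Next I would peel off the principal part. Since $\al_m$ is the smallest order, the most singular factor of $\wh{u_n}$ at $s=0$ is $\f{q_m(a,\vp_n)}{\la_n}s^{\al_m-1}$, and $\mathcal{L}^{-1}[s^{\al_m-1}]=t^{-\al_m}/\Ga(1-\al_m)$ inverts it to exactly the leading term in \eqref{eq-asymp-u_n}. Hence I would study
\[
\wh{w_n}(s):=\wh{u_n}(s)-\f{q_m(a,\vp_n)}{\la_n}s^{\al_m-1}=\f{(a,\vp_n)}{\la_n}\cdot\f{\la_n\sum_{j=1}^{m-1}q_j s^{\al_j-1}-q_m s^{\al_m-1}\sum_{j=1}^m q_j s^{\al_j}}{\la_n+\sum_{j=1}^m q_j s^{\al_j}},
\]
where subtracting precisely this multiple of $s^{\al_m-1}$ cancels the $j=m$ term in the numerator. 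The leading small-$s$ behavior of $\wh{w_n}$ is then $\f{q_{m-1}(a,\vp_n)}{\la_n}s^{\al_{m-1}-1}$, which corresponds to decay of order $t^{-\al_{m-1}}$ carrying the factor $\la_n^{-1}$, matching \eqref{eq-asymp-u_n}.

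To convert this into a rigorous bound I would represent $w_n(t)=\f1{2\pi\ri}\int_\ga e^{st}\wh{w_n}(s)\,\rd s$ and estimate the Hankel integral. Splitting $\ga$ into two rays $\arg s=\pm\psi$ with $\psi\in(\pi/2,\pi)$ and a small circle of radius $\sim 1/t$, the scaling $s=\si/t$ converts each piece, under a bound of the form $|\wh{w_n}(s)|\le C|(a,\vp_n)|\la_n^{-1}|s|^{\al_{m-1}-1}$, into $t^{-\al_{m-1}}$ times a convergent integral; on the rays the factor $e^{\mathrm{Re}(s)\,t}=e^{|\si|\cos\psi}$ with $\cos\psi<0$ ensures absolute convergence. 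This would give \eqref{eq-asymp-u_n}.

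The main obstacle is precisely the uniform estimate of $\wh{w_n}$ needed in the last step, i.e.\ pinning down that the next singularity after $s^{\al_m-1}$ governing the decay is $s^{\al_{m-1}-1}$. The delicate point is the product term $-q_m s^{\al_m-1}\sum_j q_j s^{\al_j}$ in the numerator of $\wh{w_n}$: its most singular piece is of order $s^{2\al_m-1}$ and carries an extra factor $\la_n^{-1}$, so after dividing by the denominator (which tends to $\la_n$) one obtains a contribution of order $\la_n^{-2}|s|^{2\al_m-1}$. Showing that this and the other cross terms are absorbed into the claimed $\f{C|(a,\vp_n)|}{\la_n}t^{-\al_{m-1}}$ bound — in particular tracking how $\al_{m-1}$ compares with $2\al_m$ and keeping $C$ independent of $n$ so that the $H^2$ summation in Theorem \ref{thm-asymp} goes through — is the crux of the argument.
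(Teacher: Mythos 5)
Your plan follows the paper's own route step for step: Laplace transform, the zero-free denominator $w(s)=\sum_{j=1}^m q_js^{\al_j}+\la_n$ on the main sheet, deformation of the Bromwich line onto a Hankel contour, and subtraction of the principal part $\f{q_m(a,\vp_n)}{\la_n}s^{\al_m-1}$; your $\wh{w_n}$ is exactly the function whose inverse transform the paper writes as $a_n\int_0^\infty H(r,\la_n)\,\e^{-rt}\,\rd r$. But the proposal stops where the proof has to start: you yourself declare the uniform estimate of $\wh{w_n}$ to be ``the crux'' and leave it unproven, so \eqref{eq-asymp-u_n} is never actually established. Moreover, the single intermediate bound you propose to feed into the contour estimate, $|\wh{w_n}(s)|\le C|(a,\vp_n)|\la_n^{-1}|s|^{\al_{m-1}-1}$, is false near $s=0$ whenever $2\al_m<\al_{m-1}$: the cross term you flagged contributes, after dividing by $w(s)\approx\la_n$, a piece of size $\la_n^{-1}|s|^{2\al_m-1}$, and $|s|^{2\al_m-1}\gg|s|^{\al_{m-1}-1}$ as $s\to0$ in that regime. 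A single power law on the whole contour cannot carry the argument.

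What the paper does to close this is a two-regime estimate of the density $H(r,\la_n)$, integrated against $\e^{-rt}$: for $r\le\ve_0\la_n$ one has $|w(s)|\ge C\la_n$ uniformly in $n$, giving $|H(r,\la_n)|\le\f C{\la_n}\left(\sum_{j=1}^{m-1}r^{\al_j-1}+\sum_{j=1}^m r^{\al_j+\al_m-1}\right)$ and hence, via $\int_0^\infty r^{\be-1}\e^{-rt}\,\rd r=\Ga(\be)t^{-\be}$, error terms $\f C{\la_n}t^{-\al_j}$ ($j\le m-1$) and $\f C{\la_n}t^{-\al_j-\al_m}$; for $r\ge\ve_0\la_n$ one only has $|H|\le C$, but the tail integral is bounded by $C\e^{-\ve_0\la_nt}/t\le C/(\la_nt^2)$. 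No pointwise bound by $|s|^{\al_{m-1}-1}$ alone is ever used, and uniformity in $n$ comes for free from $|w|\ge C\la_n$. That said, your final worry is genuine and is in fact passed over silently by the paper: the $j=m$ term in the above produces $\f C{\la_n}t^{-2\al_m}$, which is $O(t^{-\al_{m-1}})$ only if $2\al_m\ge\al_{m-1}$; when $2\al_m<\al_{m-1}$ the argument as written only yields the error rate $t^{-\min(\al_{m-1},\,2\al_m)}$ (the extra available factor $\la_n^{-1}$ in that term helps uniformity in $n$, not the decay rate for fixed $n$). So to turn your outline into a proof you must either run the paper's two-regime splitting and note that \eqref{eq-asymp-u_n} as stated implicitly uses $2\al_m\ge\al_{m-1}$, or restate the error as $t^{-\min(\al_{m-1},2\al_m)}$; identifying the obstacle, as you did, is not the same as overcoming it.
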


\begin{proof}
We abbreviate $a_n:=(a,\vp_n)$ for simplicity. Applying the Laplace transform
to \eqref{eq-eigen-exp} and using the formula
\[\cL(\pa_t^\al f)(s)=s^\al\cL(f)(s)-s^{\al-1}f(0+),\]
we are led to the transformed algebraic equation
\[
\cL(u_n)(s)=\f{a_n}{w(s)}\sum_{j=1}^mq_j s^{\al_j-1},\quad
w(s):=\sum_{j=1}^m q_j s^{\al_j}+\la_n.
\]
Noting that the Laplace transform of $u_n$ has a branch point zero, we should
cut off the negative part of the real axis so that the function $w(s)$ has no
zero in the main sheet of the Riemann surface including its boundaries on the
cut. In fact, for $s=r\,\e^{\ri\te}$, we see that $\sin(\al_j\te)$
($j=1,\cdots,m$) have the same signal and thus
$\mathrm{Im}(w(s))=\sum_{j=1}^mq_jr^{\al_j}\sin(\al_j\te)\ne0$ since $q_j>0$.
Therefore, the inverse Laplace transform of $\cL(u_n)$ can be represented by an
integral on the Hankel path $\mathrm{Ha}(0+)$ (i.e., the loop constituted by a
small circle $|s|=\ve$ with $\ve\to0$ and by the two borders of the cut
negative real axis). Actually, it suffices to consider the following integral
\begin{equation}\label{eq-Hankel}
\f1{2\pi\,\ri}\int_\cC\cL(u_n)(s)\,\e^{st}\,\rd s
\end{equation}
and estimate each
\[
H_\ell(t;R):=\int_{\cC_\ell}\cL(u_n)(s)\,\e^{st}\,\rd s,\quad\ell=1,\cdots,5,
\]
where the loop $\cC$ and its partitions $\cC_\ell$ ($\ell=1,\ldots,5$) are
illustrated in Figure \ref{fig-loop}.
\begin{figure}[htbp]\centering
{\unitlength 0.1in
\begin{picture}( 23.1200, 25.6500)(  1.1500,-26.0000)
%
{\color[named]{Black}{%
\special{pn 8}%
\special{pa 200 1600}%
\special{pa 2400 1600}%
\special{fp}%
\special{sh 1}%
\special{pa 2400 1600}%
\special{pa 2334 1580}%
\special{pa 2348 1600}%
\special{pa 2334 1620}%
\special{pa 2400 1600}%
\special{fp}%
\special{pa 1400 2800}%
\special{pa 1400 200}%
\special{fp}%
\special{sh 1}%
\special{pa 1400 200}%
\special{pa 1380 268}%
\special{pa 1400 254}%
\special{pa 1420 268}%
\special{pa 1400 200}%
\special{fp}%
}}%
%
{\color[named]{Black}{%
\special{pn 8}%
\special{pa 2000 800}%
\special{pa 2000 2400}%
\special{fp}%
\special{pa 400 1550}%
\special{pa 1280 1550}%
\special{fp}%
\special{pa 400 1650}%
\special{pa 1280 1650}%
\special{fp}%
}}%
%
{\color[named]{Black}{%
\special{pn 8}%
\special{ar 1400 1600 130 130  3.5363838  2.7468015}%
}}%
%
{\color[named]{Black}{%
\special{pn 8}%
\special{ar 1400 1600 1000 1000  3.1920548  5.3558901}%
}}%
%
{\color[named]{Black}{%
\special{pn 8}%
\special{ar 1400 1600 1000 1000  0.9272952  3.0916343}%
}}%
\put(21.0000,-3.0000){\makebox(0,0){$\mathbb C$}}%
\put(25.0000,-16.0000){\makebox(0,0){$x$}}%
\put(14.0000,-1.0000){\makebox(0,0){$y$}}%
\put(21.0000,-7.0000){\makebox(0,0){$A$}}%
\put(21.0000,-25.0000){\makebox(0,0){$B$}}%
\put(3.0000,-15.0000){\makebox(0,0){$C$}}%
\put(3.0000,-17.0000){\makebox(0,0){$D$}}%
\put(21.0000,-17.0000){\makebox(0,0){$\gamma$}}%
\put(16.0000,-17.0000){\makebox(0,0){$\varepsilon$}}%
\put(12.0000,-5.0000){\makebox(0,0){$\mathrm iR$}}%
\put(5.0000,-9.0000){\makebox(0,0){$\mathcal C_1$}}%
\put(5.0000,-23.0000){\makebox(0,0){$\mathcal C_3$}}%
\put(18.0000,-5.5000){\makebox(0,0){$\mathcal C_2$}}%
\put(18.0000,-26.5000){\makebox(0,0){$\mathcal C_4$}}%
\put(21.0000,-21.0000){\makebox(0,0){$\mathcal C_5$}}%
\end{picture}}
\caption{The loop $\cC$ and its partition.}\label{fig-loop}
\end{figure}

For $H_1(t;R)$, noting that $|s|=R>1$ and using a change of variable, we have
\begin{align*}
|H_1(t;R)| & =\left|\int_{\cC_1}\cL(u_n)(s)\,\e^{st}\,\rd s\right|\le
C|a_n|\int_{\pi/2}^\pi R^{\al_m}\,\e^{Rt\cos{\te}}\,\rd\te\\
& =C|a_n|R^{\al_m}\int_{-1}^0\f{\e^{Rt\eta}}{\sqrt{1-\eta^2}}\,\rd\eta,\quad
t>0.
\end{align*}
Furthermore, we break up the above integral in $[-1,0]$ into two parts and
calculate their bounds respectively as
\begin{align*}
R^{\al_m}\int_{-1}^0\f{\e^{Rt\eta}}{\sqrt{1-\eta^2}}\,\rd\eta &
=R^{\al_m}\left(\int_{-1}^{-1/2}+\int_{-1/2}^0\right)\f{\e^{Rt\eta}}{\sqrt{1-\eta^2}}\,\rd\eta\\
& \le
R^{\al_m}\,\e^{-Rt/2}\int_{-1}^{-1/2}\f{\rd\eta}{\sqrt{1-\eta^2}}+CR^{\al_m}\int_{-1/2}^0\e^{Rt\eta}\,\rd\eta\\
& \le CR^{\al_m}\,\e^{-Rt/2}+CR^{\al_m-1}\f{1-\e^{-Rt/2}}t\to0\quad\mbox{as
}R\to\infty,\ t>0.
\end{align*}
Therefore, for any $t>0$, we see that $H_1(t;R)\to0$ as $R\to\infty$. Similarly
to the calculation of $H_1(t;R)$, we have $H_3(t;R)\to0$ as $R\to\infty$ for
any $t>0$. On the other hand, since $R\cos\te\le\ga$ for all
$\te\in[\te_R,\pi/2]$ where $\te_R$ denotes the argument of point $A$, we have
\begin{align*}
|H_2(t;R)| & \le C|a_n|R^{\al_m}\int_{\te_R}^{\pi/2}\e^{Rt\cos\te}\,\rd\te\le
C|a_n|R^{\al_m}\,\e^{\ga t}\left(\f\pi2-\te_R\right)\\
& =C|a_n|R^{\al_m}\,\e^{\ga t}\left(\f\pi2-\arccos\f1R\right)\to0\quad\mbox{as
}R\to\infty.
\end{align*}
Therefore, since $w(s)$ has no zero in the main sheet of the Riemann surface
including the boundaries on the cut, the integral in \eqref{eq-Hankel}
vanishes. By Fourier-Mellin formula (see e.g. \cite{S91}), we have
\[
u_n(t)=\lim_{M\to\infty}\f1{2\pi\,\ri}\int_{\ga-\ri M}^{\ga+\ri
M}\cL(u_n)(s)\,\e^{st}\,\rd
s=\f1{2\pi\,\ri}\int_{\mathrm{Ha}(\ve)}\cL(u_n)(s)\,\e^{st}\,\rd s.
\]
Here the integral is taken on the segment from $\ga-\ri M$ to $\ga+\ri M$, and $\mathrm{Ha}(\ve)$ denotes the Hankel path in $\BC$ defined as
\[\mathrm{Ha}(\ve):=\{s\in\BC;\thinspace\arg\,s=\pm\pi,\
|s|\ge\ve\}\cup\{s\in\BC;\thinspace-\pi\le\arg\,s\le\pi,\ |s|=\ve\}.\]
By a similar argument as above, we find
\[
\f1{\Ga(1-\al_m)\,t^{\al_m}}=\lim_{M\to\infty}\f1{2\pi\,\ri}\int_{\ga-\ri
M}^{\ga+\ri M}s^{\al_m-1}\,\e^{st}\,\rd
s=\f1{2\pi\,\ri}\int_{\mathrm{Ha}(\ve)}s^{\al_m-1}\,\e^{st}\,\rd s.
\]

It is now straightforward to show that the contribution from the Hankel path
$\mathrm{Ha}(\ve)$ as $\ve\to0$ is provided by
\begin{align}
& u_n(t)-\f{q_ma_n}{\la_n\Ga(1-\al_m)\,t^{\al_m}}=a_n\int_0^\infty
H(r,\la_n)\,\e^{-rt}\,\rd r,\quad\mbox{where}\label{eq-Hankel-rep-u_n}\\
& H(r,\la_n):=-\f1\pi\,\mathrm{Im}\left\{\left.\left(\f1{w(s)}\sum_{j=1}^mq_js^{\al_j-1}-\f{q_m}{\la_n}s^{\al_m-1}\right)\right|_{s=r\,\e^{\ri\pi}}\right\}.\nonumber
\end{align}
To give the desired estimate \eqref{eq-asymp-u_n}, we observe that $|w(s)|\ge
C\la_n$ as long as $r=|s|\le\ve_0\la_n$, where $\ve_0>0$ is sufficiently small.
This indicates
\begin{align*}
|H(r,\la_n)| &
\le\left|\f{\la_n\sum_{j=1}^{m-1}q_js^{\al_j-1}-\sum_{j=1}^mq_jq_ms^{\al_j+\al_m-1}}{\la_n(\sum_{j=1}^mq_js^{\al_j}+\la_n)}\right|\\
& \le\f{C|a_n|}{\la_n}\left(\sum_{j=1}^{m-1}|s|^{\al_j-1}+\sum_{j=1}^m|s|^{\al_j+\al_m-1}\right),\quad\forall\,|s|\le\ve_0\la_n.
\end{align*}
Meanwhile, for any $s=r\,\e^{\pm\ri\pi}$ with $r\ge\ve_0\la_n$, we know that
\[
|H(r,\la_n)|\le\f{\sum_{j=1}^{m-1}q_jr^{\al_j-1}}{|\mathrm{Im}\sum_{j=1}^mq_js^{\al_j}|}+\f{\sum_{j=1}^mq_jq_mr^{\al_j+\al_m-1}}{\la_n|\mathrm{Im}\sum_{j=1}^mq_js^{\al_j}|}\le
C.
\]
Using these estimates, we break up the integral in \eqref{eq-Hankel-rep-u_n}
into two parts and give respective bounds as
\begin{align*}
\left|\int_0^{\ve_0\la_n}H(r,\la_n)\,\e^{-rt}\,\rd r\right| & \le\f
C{\la_n}\int_0^\infty\left(\sum_{j=1}^{m-1}r^{\al_j-1}+\sum_{j=1}^mr^{\al_j+\al_m-1}\right)\e^{-rt}\,\rd
r\\
& \le\f
C{\la_n}\left(\sum_{j=1}^{m-1}\f1{t^{\al_j}}+\sum_{j=1}^m\f1{t^{\al_j+\al_m}}\right),\\
\left|\int_{\ve_0\la_n}^\infty H(r,\la_n)\,\e^{-rt}\,\rd r\right| & \le
C\int_{\ve_0\la_n}^\infty\e^{-rt}\,\rd r=\f C{t\,\e^{\ve_0\la_nt}}\le\f C{\la_nt^2}.
\end{align*}
Collecting the above two estimates, we obtain \eqref{eq-asymp-u_n} for
sufficiently large $t$.
\end{proof}

\begin{proof}[Proof of Theorem \ref{thm-asymp}]
Let $u$ take the form of \eqref{eq-rep-u-infty} which solves
\eqref{eq-gov-u}--\eqref{eq-IC-u} with $a\in L^2(\Om)$ and $F=0$, and fix any $T>0$ sufficiently
large. For all $t\ge T$, it immediately follows from Lemma \ref{lem-asymp-u_n}
and the eigenfunction expansion that
\begin{align*}
\left\|u(\,\cdot\,,t)-\f{(-L)^{-1}(q_ma)}{\Ga(1-\al_m)\,t^{\al_m}}\right\|_{H^2(\Om)}
& \le
C\left\|\sum_{n=1}^\infty\left(u_n(t)-\f{q_m(a,\vp_n)}{\la_n\Ga(1-\al_m)\,t^{\al_m}}\right)\vp_n\right\|_{\cD(-L)}\\
&
=C\left(\sum_{n=1}^\infty\left|\la_nu_n(t)-\f{q_m(a,\vp_n)}{\Ga(1-\al_m)\,t^{\al_m}}\right|^2\right)^{1/2}\\
&
\le\f C{t^{\al_{m-1}}}\left(\sum_{n=1}^\infty|(a,\vp_n)|^2\right)^{1/2}=\f{C\|a\|_{L^2(\Om)}}{t^{\al_{m-1}}},
\end{align*}
implying $u\in C([T,\infty);H^2(\Om)\cap H^1_0(\Om))$. On the other hand, since
Theorem \ref{thm-reg-v}(a) guarantees $u\in C([0,T];L^2(\Om))\cap
C((0,T];H^2(\Om)\cap H^1_0(\Om))$, the proof is finished by combining the
regularity results in the finite and infinite time spans.
\end{proof}

\begin{rem}\label{rem-asymp}
If some $q_{j_0}$ is negative, then we cannot obtain the asymptotic estimate
for the solution $u$ of the initial-boundary value problem
\eqref{eq-gov-u}--\eqref{eq-IC-u}. In fact, for some $n\in\BN$ sufficiently
large, we study the following problem
\[
\begin{cases}
\pa_t^{1/2}u(x,t)-3\la_n\pa_t^{1/4}u(x,t)=Lu(x,t), & x\in\Om,\ t>0,\\
u(x,t)=0, & x\in\pa\Om,\ t>0.\\
u(x,0)=a_n\vp_n=(a,\vp_n)\vp_n, & x\in\Om,
\end{cases}
\]
where $(\la_n,\vp_n)$ is the $n$-th pair in the eigensystem of the elliptic
operator $-L$, and $a\in L^2(\Om)$. The Laplace transform of the solution reads
\[
\cL(u)(s)=\f{a_n}{w(s)}\left(s^{-1/2}-3\la_ns^{-3/4}\right)\vp_n,\quad
w(s):=s^{1/2}-3\la_ns^{1/4}+\la_n.
\]
We see that $\{s;\thinspace w(s)=0\}$ is a finite set with all of the zero
points having finite multiplicity in the main sheet of the Riemann surface, and
there is no zero point on the negative part of the real axis since $\la_n$ is
sufficiently large. Furthermore, we can prove that there exist zeros of $w(s)$
having positive real parts. In fact, obviously
\[
r_\pm:=\f{3\la_n\pm\sqrt{9\la_n^2-4\la_n}}2>0
\]
solves $w(r_\pm)=0$, and we have
\[
w'(r_\pm)=\f12r_\pm^{-1/2}-\f{3\la_n}4r_\pm^{-3/4}\ne0.
\]
Note that
\[
\f1{2\pi\,\ri}\int_\cC\cL(u)(s)\,\e^{st}\,\rd
s=\sum\mathrm{Res}\{\cL(u)(s)\,\e^{st},\cC\},
\]
where $\cC$ is defined in Figure \ref{fig-loop}, $\mathrm{Res}\{f,\cC\}$
denotes the residue of function $f$ in the domain enclosed by $\cC$, and the
sum is taken over all the poles of $\cL(u)(s)\,\e^{st}$ in this domain.
Repeating the argument in the proof of Lemma \ref{lem-asymp-u_n}, we deduce
\[
u(t)=\lim_{M\to\infty}\f1{2\pi\,\ri}\int_{\ga-\ri M}^{\ga+\ri
M}\cL(u)(s)\,\e^{st}\rd
s=\sum\mathrm{Res}\{\cL(u)(s)\,\e^{st}\}+\f1{2\pi\,\ri}\int_{\mathrm{Ha}(0+)}\cL(u)(s)\,\e^{st}\,\rd
s.
\]
Here the sum is taken over all the poles of $\cL(u)(s)\,\e^{st}$ lying on the
left-hand side of the line $\{z=\ga+\ri\,M;\thinspace M\in\BR\}$ with
$\ga>r_+$, and there are only finite terms in this summation since $w(s)$ only
has finite number of zero points including multiplicity in the main sheet of
the Riemann surface cutting of the negative axis. We can easily see that
\[
\mathrm{Res}\{(\cL(u)(s)\,\e^{st})|_{s=r_\pm}\}=\f{r_\pm^{-1/2}-3\la_nr_\pm^{-3/4}}{w'(r_\pm)}\,\e^{r_\pm t}\vp_n.
\]
Of course $\e^{r_\pm t}$ tend to infinity as $t\to\infty$ since $r_\pm>0$,
indicating that the asymptotic behavior in Theorem \ref{thm-asymp} does not
hold for this case.
\end{rem}

\Section{Concluding Remarks}\label{sec-rem}

We summarize this paper by providing several concluding remarks. Concerning the
initial-boundary value problem \eqref{eq-gov-u}--\eqref{eq-IC-u} for multi-term
time-fractional diffusion equations, we mainly investigate the well-posedness
and the long-time asymptotic behavior of the solution, which turn out to be
mostly parallel to those of the single-term prototype. On the basis of the
representation of solutions and a careful analysis of multinomial
Mittag-Leffler functions, we succeed in dominating the solutions by the initial
value $a$ and the source term $F$. Although uniqueness and stability also
follow from the maximum principle developed in \cite{L11}, we carry out various
estimates so that regularity and short-time asymptotic behaviors of the
solutions are directly connected with the regularity of $a$ and $F$ (see
Theorems \ref{thm-reg-v}--\ref{thm-reg-w}). Furthermore, in Theorem
\ref{thm-Lip-coef} we establish the Lipschitz stability of the solution with
respect to $\al_j$, $q_j$ and the diffusion coefficient, which is not only
important by itself but also applicable to the corresponding inverse
coefficient problem when treated by a minimization approach (see \cite[Theorem
5]{LZJY13}).

Simultaneously, we also obtain an extended version of \cite[Corollary
2.6]{SY11} in Theorem \ref{thm-asymp}, which asserts that, if the solution does
not vanish identically, then its decay rate cannot exceed $t^{-\al_m}$, where
$\al_m$ is the minimum order of fractional time-derivative. It is a remarkable
property of fractional diffusion equations because the classical diffusion
equation admits non-zero solutions decaying exponentially. This characterizes
the slow diffusion in contrast to the classical one.

In the formulation of the initial-boundary value problem, we emphasize that the
coefficients $q_j$ of the time derivatives are positive constants because this
assumption is obligatory not only to acquire explicit solutions but also to
apply the Laplace transform in time, which are essential in the discussions of
well-posedness and asymptotic behavior, respectively. On the other hand, if
$q_j$ are space-dependent, then explicit solutions are not available so that
one should rely on a fixed point argument for the unique existence of solution,
and the improvement of regularity in space is strictly less than $2$ orders
(see \cite[Theorem 2]{BY13}). On the other hand, if some $q_{j_0}$ is negative,
then one may construct a counterexample in which the asymptotic property fails
(see Remark \ref{rem-asymp}).

However, in view of practical applications and theoretical interests, the
linear non-symmetric diffusion equation with positive variable coefficients of
Caputo derivatives in time can be regarded as a more feasible model equation
than that we have studied in the current paper, but it will be definitely more
challenging. Though still under consideration, we expect to establish parallel
results for this more generalized case.

\bigskip

{\bf Acknowledgement}\ \ The second author appreciates the invaluable
discussions with Professor William Rundell, Professor Raytcho Lazarov, Dr.
Lihua Zuo and Mr. Zhi Zhou (Texas A\&M University).

\end{document}